\pgfplotsset{compat=newest}
\tikzset{
  font={\fontsize{12pt}{12}\selectfont}}
\newfont{\NUMBERS}{msbm8 scaled\magstep1}
\newcommand{\REAL}{\mbox{\NUMBERS R}}
\newcommand{\Vect}[2][]
{
  \ifthenelse{\equal{#1}{}}
  {\boldsymbol{#2}}
  {{#2}_{#1}}
}
\newcommand{\Matr}[2][]
{
  \ifthenelse{\equal{#1}{}}
  {\boldsymbol{#2}}
  {{#2}_{#1}}
}
\newcommand{\Argmin}{\operatornamewithlimits{argmin\vphantom{q}}}
\newcommand{\Var}{\ensuremath{\operatorname{var}}}
\newcommand{\Err}{\ensuremath{\operatorname{err}}}
\newcommand{\Tendsto}{\rightarrow}
\newcommand{\dx}{\, dx}
\newcommand{\Haus}[1]{\mathcal{H}^{#1}}
\DeclareMathOperator{\Div}{\operatorname{\nabla\cdot}}
\DeclareMathOperator{\Grad}{\nabla}
\DeclareMathOperator{\Sign}{\operatorname{sign}}
\def\XXint#1#2#3{{\setbox0=\hbox{$#1{#2#3}{\int}$ }
\vcenter{\hbox{$#2#3$ }}\kern-.6\wd0}}
\newcommand{\ABS}[1]{\left|#1\right|}
\newcommand{\NORM}[1]{\left\|#1\right\|}
\newcommand{\Dt}[1]{#1'}
\newcommand{\Lyap}{\ensuremath{\mathcal{L}}}
\newcommand{\Wmass}{\ensuremath{\mathcal{M}}}
\newcommand{\Ene}{\ensuremath{\mathcal{E}_{\Forcing}}}
\newcommand{\Dim}{d}
\newcommand{\Domain}{\ensuremath{\Omega}}
\newcommand{\tstep}{k}
\newcommand{\tstepp}{k+1}
\newcommand{\Deltat}[1][]{
  \ifthenelse{\equal{#1}{}}
  {\Delta t_{\tstep}}
  {\Delta t_{#1}}
  }
\newcommand{\PolySymb}{\mathcal{P}}
\newcommand{\PC}[1]{\ensuremath{\PolySymb_{#1}}}
\newcommand{\PONE}{\ensuremath{\PolySymb_{1}}}
\newcommand{\PZERO}{\ensuremath{\PolySymb_{0}}}
\newcommand{\MeshPar}{h}
\newcommand{\Cell}[1][]{\ifthenelse{\equal{#1}{}}{T}{T_{#1}}}
\newcommand{\Tsymb}{\mathcal{T}}
\newcommand{\Triang}[1][]
{
  \ifthenelse{\equal{#1}{}}
  {\Tsymb}
  {\Tsymb_{#1}}
}
\newcommand{\PotOp}{\Pot}
\newcommand{\PotOf}[2][]
{
  \ifthenelse{\equal{#1}{}}
  {\Pot_{\Forcing}(#2)}
  {\Pot_{#1}(#2)}
}
\newcommand{\Vof}[2]{[#1]^{#2}}
\newcommand{\Radon}{\mathcal{M}}
\newcommand{\Lspacechar}{L}
\newcommand{\Lspace}[2][]{
  \ifthenelse{\equal{#1}{}}
  {\Lspacechar^{#2}}
  {\Lspacechar_{#1}^{#2}}
}
\newcommand{\Lplus}[1]{\Lspace{#1}_{+}}
\newcommand{\Cachar}{\mathcal{C}}
\newcommand{\Cont}[1][]{
  \ifthenelse{\equal{#1}{}}
  {\Cachar} 
  {\Cachar^{#1}}
}
\newcommand{\HolderExp}{\delta}
\newcommand{\Holder}[1][]{
  \ifthenelse{\equal{#1}{}}
  {\Cachar^{\HolderExp}}
  {\Cachar^{#1}}  
}
\newcommand{\Lip}[1]
{
  \ifthenelse{\equal{#1}{}}
  {\text{Lip}}
  {\text{Lip}_{#1}}        
}
\newcommand{\Sob}[2][]{
  \ifthenelse{\equal{#2}{}}
  {H^{#1}}
  {W^{#1,#2}}  
}
\newcommand{\DHolder}[2][]
{
  \ifthenelse{\equal{#2}{}}
  {\Cachar^{#1,\HolderExp}}   
  {\Cachar^{#1,#2}}  
}
\newcommand{\VspaceSymb}{\mathcal{V}}
\newcommand{\Vspace}[1][]{
  \ifthenelse{\equal{#1}{}}
  {\VspaceSymb_{\MeshPar}}
  {\VspaceSymb_{\MeshPar}(#1)}
}
\newcommand{\Vdim}{N}
\newcommand{\VbaseSymb}{\varphi}
\newcommand{\Vbase}[1][]{
  \ifthenelse{\equal{#1}{}}
  {\VbaseSymb}
  {\VbaseSymb_{#1}}
}
\newcommand{\WspaceSymb}{\mathcal{W}}
\newcommand{\Wspace}[1][]{
  \ifthenelse{\equal{#1}{}}
  {\WspaceSymb_{\MeshPar}}
  {\WspaceSymb_{\MeshPar}(#1)}
}
\newcommand{\Wdim}{M}
\newcommand{\WbaseSymb}{\psi}
\newcommand{\Wbase}[1][]{
  \ifthenelse{\equal{#1}{}}
  {\WbaseSymb}
  {\WbaseSymb_{#1}}
}
\newcommand{\Tdens}{\mu}
\newcommand{\TdensIni}{\Tdens_0}
\newcommand{\TdensH}{\Tdens_{\MeshPar}}
\newcommand{\OptTdensH}{\TdensH^*}
\newcommand{\OptTdens}{\Tdens^*}
\newcommand{\Pot}{u}
\newcommand{\PotH}{\Pot_{\MeshPar}}
\newcommand{\OptPot}{\Pot^*}
\newcommand{\OptPotH}{\PotH^*}
\newcommand{\Dirac}[1]{\delta_{#1}}
\newcommand{\Equi}[1]{#1^{*}}
\newcommand{\Vel}{v}
\newcommand{\OptVel}{\Vel^*}
\newcommand{\Field}{w}
\newcommand{\Ftest}{\varphi}
\newcommand{\TdVec}[1][]{
  \ifthenelse{\equal{#1}{}}     
  {\Vect{\Tdens}}                
  {\Vect{\Tdens}^{#1}}           
  }
\newcommand{\UVec}[1][]{
  \ifthenelse{\equal{#1}{}}     
  {\Vect{\Pot}}                  
  {\Vect{\Pot}^{#1}}             
  }                             
\newcommand{\TdensHIni}[1][]{
  \ifthenelse{\equal{#1}{}}
  {\TdVec[0]}
  {\TdVec[0]_{#1}}
}
\newcommand{\Potp}{u_p}
\newcommand{\PLaplSpace}{\Sob[1]{p}(\Domain)}
\newcommand{\Pflux}{\beta}
\newcommand{\Pbranch}{\PPP}
\newcommand{\Plapl}{p}
\newcommand{\Forcing}{f}
\newcommand{\Source}{\Forcing^{+}}
\newcommand{\Sink}{\Forcing^{-}}
\newcommand{\Fradial}{F}
\newcommand{\TolTime}{\tau_{\mbox{{\scriptsize T}}}}
\newcommand{\PPP}{q}
\newcommand{\Eps}{\varepsilon}
\newcommand{\Pwsymb}{r}
\newcommand{\Pwmass}[1][]
{
  \ifthenelse{\equal{#1}{}}
  {\Pwsymb}
  {\Pwsymb(#1)}
}
\newcommand{\BallSymb}{B}
\newcommand{\Ball}[2][]
{
  \ifthenelse{\equal{#2}{}}
  {\BallSymb_{#1}}
  {\BallSymb(#2,#1)}
}
\newcommand{\Avgint}[3][]
{
  \ifthenelse{\equal{#1}{}}
  {({#3})_{#2}}
  {(#3)_{#1,#2}}
}
\newcommand{\LCF}{Lyapunov-candidate functional}
\newcommand{\MK}{MK}
\newcommand{\MKEQS}{MK equations}
\newcommand{\OT}{OT}
\newcommand{\OTD}{OT\ density}
\newcommand{\BT}{BT}
\newcommand{\BTP}{BTP}
\newcommand{\CTP}{CTP}
\newcommand{\DMK}{DMK}
\newcommand{\PCG}{PCG}
\newcommand{\SPD}{SPD}
\newtheorem{Prop}{Proposition}
\newtheorem{Conject}{Conjecture}
\newtheorem{Remark}{Remark}
\crefname{equation}{Eq.}{Eqs.}
\crefname{theorem}{Theorem}{Theorems}
\crefname{chapter}{Chapter}{Chapters}
\crefname{figure}{Figure}{Figures}
\crefname{Conject}{Conjecture}{Conjectures}
\crefname{Problem}{Problem}{Problems}
\crefname{Prop}{Proposition}{Propositions}        
\crefname{Theo}{Theorem}{Theorems}
\crefname{Lemma}{Lemma}{Lemma}
\crefname{Corollary}{Corollary}{Corollaries}
\crefname{section}{Section}{Sections}
\newcommand{\added}[1]{#1}
\newcommand{\replaced}[2]{#1}
\newcommand{\deleted}[1]{}
\newcommand{\sep}{,\ }
\begin{document}

\markboth{E. Facca et al.}{Branching structures emerging from a
  continuous optimal transport model}

\title{Branching structures emerging from a continuous optimal transport
  model
}

\author{ENRICO FACCA, FRANCO CARDIN, MARIO PUTTI}
\address{Centro Ennio de Giorgi,
  Scuola Normale Superiore,
  Piazza dei Cavalieri 7, Pisa, Italy enrico.facca@sns.it
  \\
  Deparment of Mathematics Tullio Levi Civita, University of Padua,\\
  Via Trieste 62, Padova, Italy,
  \{cardin,putti\}@math.unipd.it
}

\maketitle

\begin{abstract}
  Recently a Dynamic-Monge-Kantorovich formulation of the PDE-based
  $\Lspace{1}$-optimal transport problem was presented. The model
  considers a diffusion equation enforcing the balance of the
  transported masses with a time-varying conductivity that evolves
  proportionally to the transported flux.
  In this paper we present an extension of this model that considers a
  time derivative of the conductivity that grows as a power law of the
  transport flux with exponent $\Pflux>0$. A sub-linear growth
  ($0<\Pflux<1$) penalizes the flux intensity and promotes distributed
  transport, with equilibrium solutions that are reminiscent of
  Congested Transport Problems.
  On the contrary, a super-linear growth ($\Pflux>1$) favors flux
  intensity and promotes concentrated transport, leading to the
  emergence of steady-state ``singular'' and ``fractal-like''
  configurations that resemble those of Branched Transport Problems.
  We derive a numerical discretization of the proposed model that is
  accurate, efficient, and robust for a wide range of scenarios. For
  $\Pflux>1$ the numerical model is able to reproduce highly irregular
  and fractal-like formations without any a-priory structural
  assumption.
\end{abstract}

\keywords{
  Optimal Transport Problems \sep
  Congested, Branched, Ramified Transport\sep
  $\Plapl$-Laplacian\sep
  Monge-Kantorovich formulation\sep
  Numerical solution
}

\subjclass[2000]{
  49K20 \sep 
  49M25 \sep
  49M29 \sep
  35J70 \sep
  65N30 
}

\section{Introduction}
In this paper we propose and analyze theoretically and numerically an
extension of the Dynamic Monge-Kantorovich (\DMK) Optimal Transport
(\OT) model recently presented in~\citet{Facca-et-al:2018}.  The
\DMK\ can be summarized as follows.  Consider an open, bounded,
connected, and convex domain $\Omega$ in $\REAL^{\Dim}$ with smooth
boundary.  Given two non-negative rate densities, $\Source$ and
$\Sink$ such that $\int_{\Domain}\Source\dx=\int_{\Domain}\Sink\dx$,
\added{consider $\Forcing:=\Source-\Sink$. We want to} find the
pair $(\Tdens,\Pot):[0,+\infty[\times\Domain\mapsto
    \REAL^+\times\REAL$ that solves:
\begin{subequations}
  \label{eq:sys-1}
  \begin{align}
    \label{eq:sys-1-div}
    &  -\Div\left(\Tdens(t,x)\Grad\Pot(t,x)\right) 
      = \Forcing(x)\deleted{:=\Source(x)-\Sink(x)},
    \\ 
    \label{eq:sys-1-dyn}
    & \Dt{\Tdens}(t,x) =
      \Tdens(t,x)\ABS{\Grad\Pot(t,x)} - \Tdens(t,x) ,
    \\
    \label{eq:sys-1-bc} 	
    & \Tdens(0,x) =\TdensIni(x)> 0 ,
  \end{align}
\end{subequations}
completed with zero Neumann boundary conditions. Here $\Dt{\Tdens}$
denotes derivative with respect to time. \added{ Hereafter, we
  omit the dependence from $x$ and kept only the
  time-dependence. We will use this convention whenever no confusion
  arises.  }

In~\citet{Facca-et-al:2018} it is conjectured that the solution pair
$(\Tdens(t),\Pot(t))$ as function of time converges for
$t\Tendsto\infty$ towards $(\OptTdens,\OptPot)$ where, $\OptTdens$ is
the \OTD\ and $\OptPot$ is a Kantorovich Potential associated to
$\Source$ and $\Sink$. The pair $(\OptTdens,\OptPot)$ solves the
Monge-Kantorovich (\MK) partial differential equations introduced
in~\citet{Evans-Gangbo:1999}.
Local existence and uniqueness of $(\Tdens(t),\Pot(t))$ for
$t\in[0,\tau_{0}[$, with $\tau_0>0$ depending on the initial data, was
    proved under the hypotheses
    \deleted{$\TdensIni\in\Holder{}(\Domain)$} \added{ that
      $\TdensIni$ is H\"older continuous}
    and $\Forcing\in \Lspace{\infty}(\Domain)$. However, well
    posedness and full regularity of the solution, as well as its
    convergence towards the solution of the \MK\ equations, are still
    open issues.  The conjecture is strongly supported by convincing
    numerical results reported
    in~\citet{Facca-et-al:2018,Facca-et-al:sisc:2018}. Additionally, a
    \LCF\ for the dynamics is given by:
\begin{equation*}
  \Lyap(\Tdens)
  = 
  \frac{1}{2}
  \int_{\Domain}{
    \Tdens \ABS{\Grad \PotOf{\Tdens}}^2 
  }\dx 
  +
  \frac{1}{2}
  \int_{\Domain}{
    \Tdens
  }\dx,
\end{equation*}
where $\PotOf{\Tdens}$ indicates the weak solution of the elliptic
equation~\cref{eq:sys-1-div} for a given $\Forcing$. It is possible to
prove that $\Lyap$ decreases along the $\Tdens(t)$-trajectory and the
\OTD\ is its unique minimizer~\citep{Facca-et-al:sisc:2018}.

The extension we propose in this paper, suggested by the discrete
counterpart reported in~\citet{Tero-et-al:2007}, modifies the dynamics
of $\Tdens(t)$ by introducing a power law of the transport flux
$\Tdens(t)\ABS{\Grad\Pot(t)}$ with exponent $\Pflux>0$.
\deleted{Note that we have omitted the explicit dependence from
  $x$ and kept only the time-dependence. We will use this convention
whenever no confusion arises.  Using this notation } The proposed model
can be described as the problem of finding the pair
$(\Tdens(t),\Pot(t))$ such that:
\begin{subequations}
  \label{eq:sys-pflux}
  \begin{align}
    \label{eq:sys-pflux-div}
    &  -\Div\left(\Tdens(t)\Grad\Pot(t)\right) 
      = \Forcing\deleted{=\Source-\Sink} ,
    \\ 
    \label{eq:sys-pflux-dyn}
    & \Dt{\Tdens}(t) =
      \left[\Tdens(t)\ABS{\Grad\Pot(t)}\right]^{\Pflux}
      - \Tdens(t) ,
    \\
    \label{eq:sys-pflux-bc} 	
    & \Tdens(0) =\TdensIni(x)> 0 ,
  \end{align}
\end{subequations}
completed with zero Neumann boundary conditions.  Assuming
well-posedness of the above system, we claim that, as in the case
$\Pflux = 1$, the solution pair $(\Tdens(t),\Pot(t))$ converges toward
an equilibrium configuration $(\Equi{\Tdens}, \Equi{\Pot})$ as
$t\Tendsto\infty$. Moreover, we claim that this equilibrium point
should be related to congested ($0<\Pflux<1$) and branched
($\Pflux>1$) OT problems.  In fact, intuitively, a sub-linear growth
should penalize flux intensity (i.e. the transport density) and
promote distributed transport. Correspondingly, the equilibrium
solutions should be reminiscent of Congested Transport Problem (CTP),
\added{a branch of \OT\ theory that studies reallocation
  problems where mass concentration is penalized, and finds
  applications in, e.g., crowd motion and urban traffic (see, e.g.,
  \cite{Santambrogio:2015,BrascoPetrache:2014,Buttazzo-et-al:2009}).}
On the other hand, a super-linear growth should favor flux intensity
and promote concentrated transport, leading to the emergence of
“singular” and “fractal-like” configurations that resemble the
structures typical of Branched Transport Problem (BTP).  \added{
  Branched transport is an area of \OT\ that studies reallocation
  problems where mass concentration is encouraged along the transport
  paths, favoring ``economy of scale''. This is a common strategy that
  can be easily assumed to be a fundamental mechanism in the
  development of many natural systems such as, e.g., tree branches and
  roots, blood vessels, river networks, etc.~\citep{Banavar:1999,
    Banavar:2014, Rinaldo:2014}.  }
Note that any homogeneous positive function can replace the power law,
thus non-linearly modulating the mixing of the different behaviors. We
are interested in the simpler case of the power law as a model problem
that incorporates all the interesting responses.

The above claims are supported by extensive numerical experiments on a
number of different two-dimensional test cases and by some partial
results and heuristic justifications. Among the latter, we are
able to derive a \LCF\ $\Lyap_{\Pflux}$ given by:
\begin{subequations}
  \label{eq:lyap}
  \begin{gather}
    \label{eq:lyap-pflux}
    \Lyap_{\Pflux}(\Tdens) := \Ene(\Tdens) + \Wmass_{\Pflux} (\Tdens) ,
    \\
    \label{eq:ene-wmass-pflux}
    \Ene(\Tdens):= \frac{1}{2} 
    \int_{\Domain}{\Tdens\ABS{\Grad\PotOf{\Tdens}}^2}\dx ;
    \qquad
    \Wmass_{\Pflux}(\Tdens):=
    \begin{cases}
      \displaystyle
        \frac{1}{2}
        \int_{\Omega}\ln(\Tdens)\dx &\text{if } \Pflux=2
      \\
      \displaystyle
      \frac{1}{2}
      \int_{\Omega}
        \frac{\Tdens^{\frac{2-\Pflux}{\Pflux}}}{\frac{2-\Pflux}{\Pflux}}\dx
      &\text{otherwise}     
    \end{cases} .
  \end{gather}
\end{subequations}
 \replaced{ Similarly to the case $\Pflux=1$, $\Lyap_{\Pflux}$
   decreases in time along the $\Tdens(t)$-trajectory, thus it is
   natural to investigate if the \LCF\ admits a minimum, which is the
   natural limit candidate for our dynamics.  Intuitively, such minimum
   provides a trade-off between the transport cost, measured by the
   term $\Ene(\Tdens)$, and the ``infrastructure'' cost, measured by the
   term $\Wmass_{\Pflux}(\Tdens)$. For $0<\Pflux\leq 1 $ the second
   term is convex, penalizing the concentration of the support of
   $\Tdens$, as prescribed by the CTP. On the other hand, for
   $1<\Pflux<2$ $\Wmass_{\Pflux}(\Tdens)$ is concave, favoring the
   concentration of $\Tdens$ on narrow supports.  }{ Similarly to the
   case $\Pflux=1$, $\Lyap_{\Pflux}$ decreases in time along the
   $\Tdens(t)$-trajectory and, for certain values of $\Pflux$, we can
   give the exact characterization of the minimum of the \LCF, which
   is the natural limit candidate for our dynamics.  }

For the case $0<\Pflux<1$, we claim that the pair
$(\Tdens(t),\Pot(t))$ converges at long times toward
$(\ABS{\Grad\Potp}^{\Plapl-2},\Potp)$, with
$\Plapl=\frac{2-\Pflux}{1-\Pflux}$, where $\Potp$ is the solution of
the $\Plapl$-Poisson equation with forcing term
$\Forcing\deleted{=\Source-\Sink}$:
\begin{equation*}
  -\Div(\ABS{\Grad\Potp}^{\Plapl-2}\Grad\Potp)
  =\Forcing\deleted{=\Source-\Sink} .
\end{equation*}
This conjecture is supported by the fact that, for $\PPP=2-\Pflux$,
minimization of $\Lyap_{\Pflux}$ is equivalent to the following
variational problem:
\begin{equation}
  \label{eq:min-divergence}
  \inf_{\Vel\in\Vof{\Lspace{\PPP}(\Domain)}{\Dim}}
  \left\{
    \int_{\Domain}
    \frac{
      \ABS{\Vel}^{\PPP}
    }{
      \PPP
    }
    \dx
    \ : \ 
    \Div(\Vel)=\Forcing\deleted{=\Source-\Sink}
  \right\},
\end{equation}
that is a classical formulation of the CTP. \deleted{a branch
  of \OT\ theory that studies reallocation problems where mass
  concentration is penalized, and finds applications in, e.g., crowd
  motion and urban traffic
  }
The equivalence between problem~\cref{eq:min-divergence} for
$\PPP\in[1,2]$ and the solution of the $\Plapl$-Poisson equation, with
$\Plapl$ conjugate to $\PPP$, is a well known
result~\citep{Ekeland-Teman:1999}.  The equivalence for
$\Pflux=1,\PPP=1$ has been proved in~\citet{Facca-et-al:2018}.
Analogously to the case $\Pflux=1$ studied
in~\cite{Facca-et-al:2018,Facca-et-al:sisc:2018}, this new formulation
of the $\Plapl$-Poisson equation leads to robust and accurate
numerical discretization schemes, providing an unconventional yet very
efficient approach, at least with respect to the classical augmented
Lagrangian strategy usually adopted for the numerical solution of the
$\Plapl$-Poisson equation~\citep{Glowinski-Marrocco:1975,
  Barrett-Liu:1993, Benamou-et-al:2015}.

The case $\Pflux>1$ is mostly addressed by numerical experimentation.
A number of two-dimensional tests suggest a connection between the
steady state equilibrium solution $(\Equi{\Tdens}, \Equi{\Pot})$ of
the proposed model and solutions of the \replaced{BTP, in particular,
  our reference formulation is that one introduced
  by~\citet{Xia:2015}. In this formulation, ~\cref{eq:min-divergence}
  is rewritten in the case $0<\PPP<1$, where the integral must be
  reinterpreted appropriately. Thus we now look for a vector valued
  measure $\Vel$ solving for $0<\PPP<1$ the following minization problem
  \begin{equation}
    \label{eq:branch-intro}
    \inf_{\Vel\in\Vof{\Radon(\Domain)}{\Dim}}
    \left\{
    \int_{\Domain}
    \ABS{\frac{d\Vel}{d\Haus{1}}}^{\PPP}
      d\Haus{1}
    \ : \ 
    \Div(\Vel)=\Forcing\dx
    \right\},
  \end{equation}
  where $d\Haus{1}$ denote the Hausdorff measure of dimension 1 (see
  ~\citep{Santambrogio:2015, Xia:2003} for the detailed definition).
}{
  Branched Transport Problem (\BTP).  Branched transport is an area of
  \OT\ that studies reallocation problems where mass concentration is
  encouraged along the transport paths, favoring ``economy of
  scale''. This is a common strategy that can be easily assumed to be
  a fundamental mechanism in the development of many natural systems
  such as, e.g., tree branches and roots, blood vessels, river
  networks, etc.
  In
  particular, our reference \BTP\ formulation is the one introduced
  by
  that looks for minimizers
  of
  in the case $0<\PPP<1$.  Here, the
  vector field $\Vel$ is a vector valued measure and the integral must
  be reinterpreted appropriately.}  Although
in our case we are still not able to exactly identify the relations
with the reference \BTP\, formal calculations backed up by several
numerical results, suggest strong connections with the functionals
minimized in more classical \BT\ problems. More precisely, we are not
able to rigorously consider the singular measures typically arising in
\BT\ transport, but the numerical results are convincingly leading to
structures that closely resemble the \BT\ transport solutions.

Notwithstanding some evident numerical inaccuracies in approximating
these singular structures and computational difficulties encountered
in the solution of the highly ill-posed linear systems arising from
the discretization of the elliptic equation, long-time numerical
solutions seem to invariably reach a state of
equilibrium. Correspondingly, depending on the source term, the
spatial distributions of the approximated transport density
$\OptTdensH$ seem to converge to fractal-like or low-dimensional
structures. These singular configurations are shown to be sensitive to
initial conditions, probably corresponding to local minima of a
non-convex \LCF \added{, being the sum of a convex and a
  concave functional, $\Ene$ and $\Wmass_{\Pflux}$} . On the other hand, all the numerical
experiments presented in this paper show how the supports of
$\OptTdensH$ have the structure of an acyclic graph connecting the
supports of $\Source$ and $\Sink$.  The absence of loops is a
fundamental characteristic of the solution of the \BTP\, never imposed
a priori in our model, that seems to suggest yet another relationship
between the long-time numerical solution of DMK and the \BT\ transport
solution.

Only sparse examples in the literature addressing the numerical
solution of the \BT\ problem exist, both in the discrete
\cite{Xia:2010} and in the continuous settings
\cite{Oudet-Santambrogio:2011}.  In this last work the authors
introduce a $\varepsilon$-relaxed functional $\Gamma$-converging to
the minimizer of problem in~\cref{eq:min-divergence} as $\varepsilon
\to 0$.  A conjugate Gradient algorithm combined with finite
difference spatial discretizations is used to find sequence of
approximated minimizer. This approach has been showed to able to solve
the \BTP\ in simple problem settings, but still it suffers of the main
problems we are facing in our approach ( e.g., parameters tuning,
convergence to local minima).

In conclusion of this introduction, we would like to remark the
similarities between the model proposed in our paper and the revisited
version of the PP model presented in the discrete setting
in~\cite{Hu-Cai:2013} and, in a continuous setting, in the model
of~\cite{Haskovec-et-al:2015}, where the modulating exponent $\Pflux$
is moved from the flux to the decay term of~\cref{eq:sys-pflux}.  In a
recent analysis, \cite{Burger-et-al:2018} studies a model that is
similar to our proposed dynamics under special conditions and connects
it to discrete optimization problems addressing \CTP\ and \BTP.
\added{Moreover, our work seems to be related to the
  work in~\cite{Xia:2014} on the discrete $\Plapl$-Laplace with
  $\Plapl<0$.}

In this paper we want to highlight the optimization capabilities of
the model that, despite the above mentioned limitations, seems to be
well-suited for the solution of optimal (branched and congested)
transport problems without imposing any a-priory graph topology.
This is particularly relevant in the case of \BTP\ where the
topology of the optimal solution is actually the main unknown. Our
results show that the relaxation introduced by the time-dependency
in our extended \DMK\ allows a relatively easy numerical formulation
that is efficient and robust.
%

\section{\LCF\ and its minimization}
\label{sec:lyap-pflux}
The local existence result obtained in~\cite{Facca-et-al:2018} could
be extended to the case $\Pflux>1$, but, numerical simulations and
theoretical considerations suggest that the assumption of
H\"older-continuity of $\Tdens$ needs to be relaxed. In this case any
existence result seem out of reach at this time. However, being mostly
concerned with the asymptotic behavior of $(\Tdens(t),\Pot(t))$
solution of~\cref{eq:sys-pflux}, in this work we assume existence and
uniqueness of a solution pair for all $t\geq 0$, and present the
formal derivation of the \LCF\ for all $\Pflux>0$, as given by the
following Proposition.
\begin{Prop}
  \label{prop-decr-lyap-pflux}
  Assume that there exists $\bar{t}>0$ such that~\cref{eq:sys-pflux}
  admits a solution pair $\left(\Tdens(t),\Pot(t)\right)$ with
  $\Cont[1]$-regularity in time for $t\in[0,\bar{t}[$. Then the \LCF\
  $\Lyap_{\Pflux}$ given in~\cref{eq:lyap} is strictly decreasing
  along the $\Tdens(t)$ trajectories for all $\beta>0$, and its Lie
  derivative is given by:
  \begin{multline}\label{eq:lie-der}
    \frac{d}{dt}\left(\Lyap_{\Pflux}(\Tdens(t)) \right)
    =
    \\
    - \frac{1}{2}
    \int_{\Omega}{\Tdens(t)^{\Pflux}
      \left(\ABS{\Grad\PotOp(\Tdens(t))}^{\Pflux}
        - \Tdens^{\left(\frac{1-\Pflux}{\Pflux}\right)\Pflux}(t)
      \right)
      \left(
        \ABS{\Grad\PotOp(\Tdens(t))}^2
        - \left(\Tdens^{\frac{1-\Pflux}{\Pflux}}(t)\right)^2
      \right)
    }\dx .
  \end{multline}
\end{Prop}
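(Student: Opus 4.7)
The plan is to differentiate $\Lyap_{\Pflux}(\Tdens(t))$ by splitting it as $\Ene(\Tdens) + \Wmass_{\Pflux}(\Tdens)$, treating each term separately and then substituting $\Dt{\Tdens}$ from the ODE~\cref{eq:sys-pflux-dyn}.

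First I would handle $\Wmass_{\Pflux}$, which is the easy piece. A direct chain rule gives, for $\Pflux\neq 2$,
\begin{equation*}
  \frac{d}{dt}\Wmass_{\Pflux}(\Tdens(t))
  = \frac{1}{2}\int_{\Domain}\Tdens(t)^{\frac{2-\Pflux}{\Pflux}-1}\,\Dt{\Tdens}(t)\dx
  = \frac{1}{2}\int_{\Domain}\Tdens(t)^{\frac{2(1-\Pflux)}{\Pflux}}\,\Dt{\Tdens}(t)\dx ,
\end{equation*}
and the $\Pflux=2$ case gives $\frac{1}{2}\int \Tdens^{-1}\Dt{\Tdens}\dx$, which agrees with the formula above with exponent $-1$. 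Hence in all cases the derivative equals $\frac{1}{2}\int (\Tdens^{(1-\Pflux)/\Pflux})^{2}\Dt{\Tdens}\dx$.

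Next I would compute $\frac{d}{dt}\Ene(\Tdens(t))$ by an envelope/adjoint argument, which is the main technical step. Using $-\Div(\Tdens\Grad\Pot)=\Forcing$ and testing with $\Pot$, I would rewrite $\Ene(\Tdens)=\frac{1}{2}\int\Forcing\,\PotOf{\Tdens}\dx$ and differentiate along the trajectory: if $\Pot'=\partial_t\PotOf{\Tdens(t)}$, then differentiating the elliptic equation in $t$ gives $-\Div(\Dt{\Tdens}\Grad\Pot+\Tdens\Grad\Pot')=0$. Testing this with $\Pot$ and integrating by parts twice, using the homogeneous Neumann boundary conditions, yields the crucial identity
\begin{equation*}
  \int_{\Domain}\Tdens\Grad\Pot\cdot\Grad\Pot'\dx
  = -\int_{\Domain}\Dt{\Tdens}\,\ABS{\Grad\Pot}^{2}\dx .
\end{equation*}
Combined with the direct contribution of the $\Tdens$-factor in $\Ene$, this gives $\frac{d}{dt}\Ene(\Tdens(t))=-\frac{1}{2}\int \Dt{\Tdens}\,\ABS{\Grad\Pot}^{2}\dx$.

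Adding the two contributions I obtain
\begin{equation*}
  \frac{d}{dt}\Lyap_{\Pflux}(\Tdens(t))
  = \frac{1}{2}\int_{\Domain}\left[(\Tdens^{(1-\Pflux)/\Pflux})^{2}-\ABS{\Grad\Pot}^{2}\right]\Dt{\Tdens}\dx ,
\end{equation*}
and I would now substitute $\Dt{\Tdens}=\Tdens^{\Pflux}\ABS{\Grad\Pot}^{\Pflux}-\Tdens$. The key algebraic step is recognizing $\Tdens=\Tdens^{\Pflux}\,(\Tdens^{(1-\Pflux)/\Pflux})^{\Pflux}$, which lets me factor $\Tdens^{\Pflux}$ out of $\Dt{\Tdens}$ and produces exactly the product structure in~\cref{eq:lie-der}, with the overall sign fixed by rewriting the bracket as $-(\ABS{\Grad\Pot}^{\Pflux}-(\Tdens^{(1-\Pflux)/\Pflux})^{\Pflux})(\ABS{\Grad\Pot}^{2}-(\Tdens^{(1-\Pflux)/\Pflux})^{2})$. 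Strict monotonicity then follows because, for $\Pflux>0$, the maps $x\mapsto x^{\Pflux}$ and $x\mapsto x^{2}$ are strictly increasing on $(0,\infty)$, so the two factors in the integrand have the same sign pointwise and vanish simultaneously only at the equilibrium relation $\ABS{\Grad\Pot}=\Tdens^{(1-\Pflux)/\Pflux}$, i.e.\ where $\Dt{\Tdens}=0$.

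The main obstacle I expect is justifying the differentiation of $\PotOf{\Tdens(t)}$ in time (and the boundary terms in the integration by parts) with only $\Cont[1]$-regularity in time of the pair; under the standing existence assumption and standard elliptic regularity applied pointwise in $t$ this is formal but clean, and it is the step where care is needed to make the envelope identity rigorous.
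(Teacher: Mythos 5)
Your proposal is correct and follows essentially the same route as the paper: differentiate $\Wmass_{\Pflux}$ by the chain rule, use the identity $\frac{d}{dt}\Ene(\Tdens(t))=-\frac{1}{2}\int_{\Domain}\Dt{\Tdens}(t)\ABS{\Grad\PotOf{\Tdens(t)}}^2\dx$, substitute the dynamics, and factor out $\Tdens^{\Pflux}$. The only difference is that you derive this energy identity via the differentiated elliptic equation tested with $\Pot$, whereas the paper simply cites it from \citet{Facca-et-al:2018}; your sign discussion at the end is, if anything, slightly more careful than the paper's.
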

\begin{proof}
  The proof is based on the equality
  \begin{equation*}
    \frac{d}{dt}\left(\Ene(\Tdens(t))\right)
    = -\frac{1}{2}
    \int_{\Domain}{\Dt{\Tdens}(t)\ABS{\Grad\PotOf{\Tdens(t)}}^2}\dx ,
  \end{equation*}
  proved in~\cite{Facca-et-al:2018} under the assumption of existence
  and uniqueness of the solution pair $(\Tdens(t),\Pot(t))$.  As a
  consequence, we can write:
  \begin{multline*}
      \frac{d}{dt}\left[\Lyap_{\Pflux}(\Tdens(t))\right]
      =\frac{d}{dt}\left[\Ene(\Tdens(t))+
        \Wmass_{\Pflux}(\Tdens(t))\right] = \\
      =
      -\frac{1}{2}
      \int_{\Domain}{
        \Dt{\Tdens}(t)
        \left(\ABS{\Grad \PotOf{\Tdens(t)}}^2
          -\Tdens^{\frac{2-\Pflux}{\Pflux}-1}(t)
        \right)
      }\dx .
  \end{multline*}
  Substituting $\Dt{\Tdens}(t)$ defined by~\cref{eq:sys-pflux-dyn} in
  the previous equation, using for simplicity $\Pot(t)$ in place of
  $\PotOf{\Tdens(t)}$ and rearranging terms, we can write:
  \begin{align*}
    \frac{d}{dt}\Lyap_{\Pflux}(\Tdens(t))
    &=
      -
      \frac{1}{2}
      \int_{\Domain}{
      \left(
        \left(\Tdens(t)\ABS{\Grad \Pot(t)}\right)^{\Pflux}
        -  
        \Tdens(t)
      \right)
      \left(\ABS{\Grad \Pot(t)}^2-\Tdens^{\frac{2-\Pflux}{\Pflux}-1}(t)\right)
      }\dx
    \\
    &
    \!  = - \frac{1}{2}
      \int_{\Domain}{
      \Tdens(t)^{\Pflux}
      \left(\ABS{\Grad  \Pot(t)}^{\Pflux}
         - \left(\Tdens^{\frac{1-\Pflux}{\Pflux}}(t)\right)^{\Pflux}
      \right)
      \left(
         \ABS{\Grad \Pot(t)}^2
         -
         \left(
           \Tdens^{\frac{1-\Pflux}{\Pflux}}(t)
         \right)^2
      \right)
      }\dx ,
  \end{align*}
  which shows that the derivative of $\Lyap_{\Pflux}$ along the
  $\Tdens(t)$-trajectories is strictly negative, since $\Tdens(t)$ is
  always strictly greater than zero when starting from a positive
  initial condition $\TdensIni>0$.\added{ The same computations holds
    also for the case $\Pflux=2$.}
\end{proof}
From~\cref{eq:lie-der} we can formally deduce that the derivative of
$\Lyap_{\Pflux}$ is equal to zero only when evaluated at the pair
$(\Equi{\Tdens},\Equi{\Pot})$ solution of:
\begin{equation}
  \label{eq:steady}
  \left\{
    \begin{aligned}
      &-\Div\left(\Equi{\Tdens}\Grad \Equi{\Pot}\right)
      =\Forcing , \\
      &\Equi{\Tdens}=\ABS{\Grad \Equi{\Pot}}^{\frac{\Pflux}{1-\Pflux}} 
      \quad  \text{on} \quad
      \left\{\Equi{\Tdens}>0\right\},
    \end{aligned}
  \right. 
\end{equation}
\added{when $\Pflux\neq 1$}.
It is worth pointing out that these last equations coincide with those
we would obtain by imposing $\Dt{\Tdens}(t)=0$
in~\cref{eq:sys-pflux-dyn}.  Moreover~\cref{eq:steady} immediately
suggests a link between the large-time equilibrium state
of~\cref{eq:sys-pflux} and the $\Plapl$-Poisson equation
\begin{equation*}
  -\Div\left(
    \ABS{\Grad \Potp}^{\Plapl-2}\Grad \Pot_{\Plapl}
  \right)
  = \Forcing ,
\end{equation*}
if the following relation between the exponents $\Pflux$ and $\Plapl$
holds
\begin{equation*}
  \Plapl-2=\frac{\Pflux}{1-\Pflux} .
\end{equation*}
However, since we are not able to provide rigorous proofs for all
values of $\Pflux>0$, 
we analyze separately the cases $0<\Pflux<1$ and $\Pflux>1$,
which, as it will be seen later, are related to the Congested and
Branched Transport Problems, respectively.

\subsection{Case $0<\Pflux<1$} 
In this instance we are able to show that the minimum of the \LCF\
$\Lyap_\Pflux$ is related to the solution of a $\Plapl$-Poisson
equation, as stated in the following Proposition.
\begin{Prop} \label{prop-min-lyap-pflux}
  Let $\Pwmass=\Pwmass[\Pflux]=(2-\Pflux)/\Pflux$,
  $\PPP=\PPP(\Pflux)=2-\Pflux$, $0<\Pflux<1$, $\Lyap_{\Pflux}$ be
  defined as in~\cref{eq:lyap}, and $\Lplus{\Pwmass}(\Domain)$ be the space
  of non-negative functions in $\Lspace{\Pwmass}(\Domain)$.  Given
  $\Source,\Sink \in \Lspace{2}(\Domain)$ with equal mass, then
  \begin{equation}  \label{eq:min-lyap-pflux}
    \inf_{\Tdens \in \Lplus{\Pwmass}(\Domain)}\Lyap_{\Pflux}(\Tdens)
    =\inf_{\Vel\in \Vof{\Lspace{\PPP}(\Domain)}{\Dim}}
    \left\{
      \int_{\Omega}{\frac{\ABS{\Vel}^{\PPP}}{\PPP}}\dx
      \ : \ \Div\Vel = \Forcing\deleted{=\Source-\Sink}
    \right\} .
  \end{equation}
  Moreover, the functional $\Lyap_{\Pflux}$ admits a unique minimizer
  $\Equi{\Tdens_{\Pflux}}\in\Lplus{\Pwmass}(\Domain)$ given by
  $\Equi{\Tdens_{\Pflux}}=\ABS{\Grad \Potp}^{\Plapl-2}$ where $\Pot_p$
  is the weak solution of the $\Plapl$-Poisson equation
  \begin{equation}
    \label{eq:plapl-eqs}
    -\Div(\ABS{\Grad \Potp}^{\Plapl-2}\Grad\Potp)=
    \Forcing\deleted{=\Source-\Sink},
  \end{equation}
  with $\Plapl$ the conjugate exponent of $\PPP$, i.e.,
  $\Plapl=\frac{2-\Pflux}{1-\Pflux}$.
\end{Prop}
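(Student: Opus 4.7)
My plan is to dualize $\Lyap_{\Pflux}$ in a way that reveals the $\Plapl$-Dirichlet energy, and then identify its value with the congested-transport minimum in (\ref{eq:min-lyap-pflux}) via classical Fenchel duality. First I would use the weak formulation of (\ref{eq:sys-pflux-div}) to write the standard variational characterization
\begin{equation*}
\Ene(\Tdens) \;=\; \sup_{\varphi}\Bigl\{\int_{\Domain}\Forcing\,\varphi\dx \,-\, \tfrac{1}{2}\int_{\Domain}\Tdens\,\ABS{\Grad\varphi}^{2}\dx\Bigr\},
\end{equation*}
the supremum being taken over a weighted $\Sob[1]{2}$ space and attained at $\varphi=\PotOf{\Tdens}$. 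Since $\Wmass_{\Pflux}$ does not depend on $\varphi$, plugging this into $\Lyap_{\Pflux}=\Ene+\Wmass_{\Pflux}$ yields the pointwise lower bound
\begin{equation*}
\Lyap_{\Pflux}(\Tdens) \;\geq\; \int_{\Domain}\Forcing\,\varphi\dx - \tfrac{1}{2}\int_{\Domain}\Tdens\,\ABS{\Grad\varphi}^{2}\dx + \frac{\Pflux}{2(2-\Pflux)}\int_{\Domain}\Tdens^{(2-\Pflux)/\Pflux}\dx
\end{equation*}
for every admissible $\Tdens$ and every test $\varphi$.

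\textbf{Pointwise reduction.} For $0<\Pflux<1$ the exponent $\Pwmass:=(2-\Pflux)/\Pflux$ exceeds $1$, so for each fixed $\varphi$ the map $s\mapsto -\tfrac{1}{2}s\ABS{\Grad\varphi}^{2}+\tfrac{1}{2\Pwmass}s^{\Pwmass}$ is strictly convex on $[0,\infty)$. Its unique minimizer is $s^{*}=\ABS{\Grad\varphi}^{\Pflux/(1-\Pflux)}=\ABS{\Grad\varphi}^{\Plapl-2}$, with minimum $-\tfrac{1}{\Plapl}\ABS{\Grad\varphi}^{\Plapl}$, using $\Plapl=(2-\Pflux)/(1-\Pflux)$. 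Minimizing the integrand pointwise in $x$ in the previous display gives, for every $\Tdens\in\Lplus{\Pwmass}(\Domain)$ and every $\varphi\in\Sob[1]{\Plapl}(\Domain)$,
\begin{equation*}
\Lyap_{\Pflux}(\Tdens) \;\geq\; \int_{\Domain}\Forcing\,\varphi\dx - \frac{1}{\Plapl}\int_{\Domain}\ABS{\Grad\varphi}^{\Plapl}\dx.
\end{equation*}
Taking the supremum over $\varphi$ on the right, this is the standard Fenchel dual of $\inf_{\Vel}\{\int_{\Domain}\ABS{\Vel}^{\PPP}/\PPP \;:\; \Div\Vel=\Forcing\}$ with conjugate exponents $\Plapl,\PPP$; by classical Ekeland--Teman duality the two values coincide and equal $\tfrac{1}{\PPP}\int_{\Domain}\ABS{\Grad\Potp}^{\Plapl}\dx$, where $\Potp$ is the weak solution of (\ref{eq:plapl-eqs}).

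\textbf{Attainment and uniqueness.} To match this lower bound I would take $\Equi{\Tdens_{\Pflux}}:=\ABS{\Grad\Potp}^{\Plapl-2}$ as candidate minimizer. Admissibility is immediate: $\Potp\in\Sob[1]{\Plapl}(\Domain)$ together with the identity $\Plapl/(\Plapl-2)=\Pwmass$ yields $\Equi{\Tdens_{\Pflux}}\in\Lplus{\Pwmass}(\Domain)$. Since $\Potp$ solves (\ref{eq:plapl-eqs}), the elliptic problem with conductivity $\Equi{\Tdens_{\Pflux}}$ is solved by $\PotOf{\Equi{\Tdens_{\Pflux}}}=\Potp$; a direct computation using $(\Plapl-2)\Pwmass=\Plapl$ gives $\Ene(\Equi{\Tdens_{\Pflux}})=\tfrac{1}{2}\int_{\Domain}\ABS{\Grad\Potp}^{\Plapl}\dx$ and $\Wmass_{\Pflux}(\Equi{\Tdens_{\Pflux}})=\tfrac{1}{2\Pwmass}\int_{\Domain}\ABS{\Grad\Potp}^{\Plapl}\dx$, summing to $\tfrac{1}{\PPP}\int_{\Domain}\ABS{\Grad\Potp}^{\Plapl}\dx$, which matches the lower bound. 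Uniqueness follows from the strict convexity of $\Wmass_{\Pflux}$ on $\Lplus{\Pwmass}(\Domain)$ for $\Pwmass>1$, combined with the convexity of $\Ene$ that is evident from its dual expression as a supremum of affine functions of $\Tdens$.

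\textbf{Main obstacle.} The principal technical difficulty is the rigorous justification of the dual characterization of $\Ene$: the natural test space is a weighted Sobolev space depending on $\Tdens$, and one must ensure well-posedness of the weighted Dirichlet problem and density of smooth functions, so that the pointwise reduction and the subsequent supremum over $\Sob[1]{\Plapl}(\Domain)$ are both legitimate. Equally, the zero Neumann boundary condition and the compatibility $\int_{\Domain}\Forcing\dx=0$ must be tracked when invoking Ekeland--Teman duality between the $\Plapl$-Poisson variational problem and the congested formulation on the right-hand side of (\ref{eq:min-lyap-pflux}); this is standard but should be made explicit through a suitable zero-mean gauge for the potentials.
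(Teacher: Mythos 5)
Your argument is correct and reaches the same conclusion as the paper, but it runs on the dual (potential) side rather than the primal (flux) side. The paper uses the second variational characterization of $\Ene$, namely $\Ene(\Tdens)=\inf_{\Field}\{\tfrac12\int\Tdens\ABS{\Field}^2\dx : \Div(\Tdens\Field)=\Forcing\}$, rewrites $\Lyap_{\Pflux}$ as an inf--inf over $(\Tdens,\Field)$, applies Young's inequality with exponents $2/\PPP$ to the product $\ABS{\Tdens\Field}^{\PPP}$, and so bounds the congested functional evaluated at $\Vel=\Tdens\Field$ by $\Lyap_{\Pflux}(\Tdens)$; the explicit optimal vector field $-\ABS{\Grad\Potp}^{\Plapl-2}\Grad\Potp$ from Ekeland--Temam then anchors the chain of inequalities. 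You instead keep only the sup characterization of $\Ene$, minimize the integrand pointwise in $\Tdens$ (which is the same Young inequality, with $\Pwmass$ and $\Plapl/2$ conjugate), land on the dual $\Plapl$-Dirichlet functional $\int\Forcing\varphi\dx-\tfrac1\Plapl\int\ABS{\Grad\varphi}^{\Plapl}\dx$, and invoke Fenchel duality to identify its supremum with the Beckmann-type value on the right of~\cref{eq:min-lyap-pflux}. Your route avoids introducing the weighted space $\Vof{\Lspace[\Tdens]{2}(\Domain)}{\Dim}$ and the divergence-constrained inner infimum, and needs only the duality value rather than the optimal flux; the paper's route makes the substitution $\Vel=\Tdens\Field$ explicit, which connects more directly to the transport-flux interpretation of the dynamics. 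Both arguments close identically: evaluation at $\Equi{\Tdens_{\Pflux}}=\ABS{\Grad\Potp}^{\Plapl-2}$ shows the common lower bound is attained, and uniqueness follows from convexity of $\Ene$ plus strict convexity of $\Wmass_{\Pflux}$.

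One minor point of care: in your attainment step you assert $\PotOf{\Equi{\Tdens_{\Pflux}}}=\Potp$, i.e.\ that the supremum defining $\Ene(\Equi{\Tdens_{\Pflux}})$ is attained at $\Potp$. The paper sidesteps this identification with a two-sided sandwich: testing the sup characterization with $\varphi=\Potp$ gives a lower bound, the flux characterization with $\Field=-\Grad\Potp$ gives an upper bound, and both equal $\tfrac12\int\ABS{\Grad\Potp}^{\Plapl}\dx$, so $\Ene(\Equi{\Tdens_{\Pflux}})$ is pinned down without discussing attainment in the weighted space. You could adopt the same device to bypass the weighted-Sobolev well-posedness issue you flag as your main obstacle; at the level of rigor of the paper this is the only substantive refinement your write-up would need.
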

\begin{proof} 
  \label{proof-min-lyap2}
  We start the proof recalling the following variational
  characterization of the energy functional $\Ene$ for a general
  non-negative measure $\Tdens$~\citep{Bouchitte-et-al:1997}:
  \begin{align}
    \label{eq:energy-duality}
    \Ene(\Tdens)
    &=
    \sup_{\varphi \in \PLaplSpace}
    \!\left\{
      \int_{\Domain}\varphi\Forcing\dx-
      \int_{\Domain}\frac{\ABS{\Grad\varphi}^2}{2}\Tdens\dx
    \right\}
    \\
    &=
    \inf_{\Field \in \Vof{\Lspace[\Tdens]{2}(\Domain)}{\Dim}}
    \!\left\{
      \int_{\Domain}\frac{\ABS{\Field}^2}{2}\Tdens\dx
      \,:\,
      \Div(\Tdens \Field ) = \Forcing
    \right\},\nonumber
  \end{align}
  where $\PLaplSpace$ is the classical Sobolev space and, with some
  abuse of notation,
  $\Lspace[\Tdens]{2}=\{v: \int_{\Omega} v^2\Tdens\dx<\infty\}$.  Note
  that in the above result the divergence constraint is considered in
  the sense of distributions.  Then, we can rewrite
  $\Lyap_{\Pflux}(\Tdens)$ as:
  \begin{gather}
    \label{eq:lyap-theta-pflux}
    \Lyap_{\Pflux}(\Tdens)
    =
    \inf_{\Field\in\Vof{\Lspace[\Tdens]{2}(\Domain)}{\Dim}}
    \left\{
      \Theta_{\Pflux}(\Tdens,\Field)
      \  : 
        \Div(\Tdens \Field) = \Forcing
    \right\}
    \quad
    \forall\Tdens\in\Lplus{\Pwmass[\Pflux]}(\Domain) ,
  \end{gather}
  where
  \begin{equation*}
    \Theta_{\Pflux}(\Tdens,\Field):=
    \frac{1}{2}
    \int_{\Domain}{\ABS{\Field}^{2}\Tdens}\dx
    +
    \frac{1}{2} 
    \int_{\Domain}{
      \frac{
        \Tdens^{\frac{2-\Pflux}{\Pflux}}
      }{
        \frac{2-\Pflux}{\Pflux}
      }
    }
    \dx .
  \end{equation*}
  For any pair
  $(\Tdens,\Field)\in\left(\Lplus{\Pwmass}(\Domain),
    \Vof{\Lspace[\Tdens]{2}(\Domain)}{\Dim}\right)$ and
  for $\PPP \in ]1,2[$, we use Young inequality with
  conjugate exponents $2/\PPP$ to obtain: 
  \begin{equation*}
    \int_{\Domain}{
      \ABS{\Field\Tdens}^{\PPP}
    }\dx
    =
    \int_{\Domain}{
      \ABS{\Field}^{\PPP}\Tdens^{\PPP/2}\Tdens^{\PPP/2}
    }\dx
    \leq
    \frac{\PPP}{2}
    \int_{\Domain}{
      \ABS{\Field}^2\Tdens
    }\dx
    +
    \frac{2-\PPP}{2}
    \int_{\Domain}{
      (\Tdens^{\frac{\PPP}{2}})^{\frac{2}{2-\PPP}}
    }\dx .
  \end{equation*}
  Since $\frac{\PPP}{2-\PPP}=\frac{2-\Pflux}{\Pflux}$, which yields the
  relation $\PPP=\PPP(\Pflux)=2-\Pflux$, dividing by $\PPP$ we
  can rewrite the previous inequality as:
  \begin{equation}\label{eq:young}
    \int_{\Domain}{
      \frac{\ABS{\Field\Tdens}^{(2-\Pflux)}}{(2-\Pflux)}
    }\dx
    \leq
    \frac{1}{2}
    \int_{\Domain}{
      \ABS{\Field}^2\Tdens
    }\dx
    +
    \frac{1}{2}
    \int_{\Domain}{
      \frac{\Tdens^{\frac{2-\Pflux}{\Pflux}}}{\frac{2-\Pflux}{\Pflux}}
    }\dx
    = \Theta_{\Pflux}(\Tdens,\Field) ,
  \end{equation}
  which holds for all $\Tdens\in\Lplus{\Pwmass[\Pflux]}(\Domain)$ and
  all $\Field\in\Vof{\Lspace[\Tdens]{2}(\Domain)}{\Dim}$.  Now we take
  the infimum on both sides of the previous equation over the
  $\Tdens$-divergence constrained
  $\Field \in \Vof{\Lspace[\Tdens]{2}(\Domain)}{\Dim}$ and use
  characterization~\cref{eq:lyap-theta-pflux} to obtain:
  \begin{align*}
    \inf_{\Field \in\Vof{\Lspace[\Tdens]{2}(\Domain)}{\Dim}}
    \left\{
      \int_{\Domain}{
        \frac{\ABS{\Field\Tdens}^{(2-\Pflux)}}{(2-\Pflux)}
      }\dx
      \ : 
        \Div(\Tdens \Field) =\Forcing
    \right\}
    \leq
    \Lyap_{\Pflux}(\Tdens)
    \quad
    \forall \Tdens \in \Lplus{\Pwmass[\Pflux]}(\Domain) .
  \end{align*}
  Taking the infimum over all
  $\Tdens \in \Lplus{\Pwmass[\Pflux]}(\Domain)$ yields:
  \begin{equation}
    \label{eq:infinf}
    \inf_{\Tdens \in \Lplus{\Pwmass[\Pflux]}(\Domain)}
    \left\{
      \inf_{\Field \in\Vof{\Lspace[\Tdens]{2}(\Domain)}{\Dim}}
      \left[
        \int_{\Domain}{
          \frac{\ABS{\Field\Tdens}^{(2-\Pflux)}}{(2-\Pflux)}
        }\dx
        \ 
        : 
        \Div(\Tdens \Field) =\Forcing
      \right]
    \right\}
    \leq
    \inf_{\Tdens \in \Lplus{\Pwmass[\Pflux]}(\Domain)}
    \Lyap_{\Pflux}(\Tdens) .
  \end{equation}
  According to the relationship between the solution of the
  $\Plapl$-Poisson equation and \CTP\, we have that for any $\PPP>1$
  the optimal vector field is given by
  \citep[ex.~2.2,~Chapter~4]{Ekeland-Teman:1999}:
  \begin{equation*}
    \Argmin_{ \Vel\in \Vof{\Lspace{\PPP}(\Domain)}{\Dim}}
    \left\{
      \int_{\Domain}{
        \frac{\ABS{\Vel}^\PPP}{\PPP}
      }\dx
      \ : 
      \Div(\Vel)=\Forcing
    \right\}
    =
    \Equi{\Vel}
    =
    -\ABS{\Grad\Pot_{\Plapl}}^{\Plapl-2}\Grad \Pot_{\Plapl} ,
  \end{equation*}
  where $\Plapl$ is the conjugate exponent of $\PPP$.  Now,
  for $\PPP=2-\Pflux$ (and thus $\Plapl=(2-\Pflux)/(1-\Pflux)$)
  and using~\cref{eq:infinf}), we obtain:
  \begin{align}
    \label{eq:ineq}
    \int_{\Domain}{
      \frac{\ABS{\Grad\Pot_{\Plapl}}^{\Plapl}}{2-\Pflux}
    }\dx
    &=
    \int_{\Domain}{
      \frac{\ABS{\Equi{\Vel}}^{(2-\Pflux)}}{2-\Pflux}
    }\dx
    =\!\!\!
    \inf_{\Vel \in \Vof{\Lspace{(2-\Pflux)}(\Domain)}{\Dim}}
    \left\{
      \int_{\Domain}{
        \frac{\ABS{\Vel}^{(2-\Pflux)}}{2-\Pflux}
      }\dx
      \ 
      : 
      \Div(\Vel) =\Forcing
    \right\}
    \\\nonumber
    &\leq
    \inf_{\Tdens \in \Lplus{\Pwmass[\Pflux]}(\Domain)}
    \left\{
      \inf_{\Field \in\Vof{\Lspace[\Tdens]{2}(\Domain)}{\Dim}}
      \left\{
        \int_{\Domain}{
          \frac{\ABS{\Field\Tdens}^{(2-\Pflux)}}{2-\Pflux}
        }\dx
        \ 
        : 
        \Div(\Tdens \Field) =\Forcing
      \right\}
    \right\}
    \\\nonumber
    &\leq
    \inf_{\Tdens \in \Lplus{\Pwmass[\Pflux]}(\Domain)}
      \Lyap_{\Pflux}(\Tdens)
    \leq 
    \Lyap(
    \OptTdens_{\Pflux}
    )
    =\Ene(\OptTdens_{\Pflux})+\Wmass_{\Pflux}(\OptTdens_{\Pflux}). 
  \end{align}
  We can compute the term $\Ene(\OptTdens_{\Pflux})$ as follows.
  By~\cref{eq:energy-duality}, the following chain of inequalities is
  obtained:
  \begin{gather*}
    \int_{\Domain}{
    \left(\Forcing\Potp-\OptTdens_{\Pflux}\frac{\ABS{\Grad\Potp}^2}{2}\right)
  }\dx
    \leq
    \sup_{\varphi\in\Sob[1]{\Plapl}(\Domain)}
    \int_{\Domain}{
      \left(
      \Forcing\Ftest-\OptTdens_{\Pflux}\frac{\ABS{\Grad\Ftest}^2}{2}
      \right)
    }\dx
    \\
    =
    \Ene(\OptTdens_{\Pflux})
    =
    \inf_{\Field \in \Vof{\Lspace[\OptTdens_{\Pflux}]{2}(\Domain)}{\Dim}}
    \left\{
      \int_{\Domain}{
        \frac{\ABS{\Field}^2}{2}\OptTdens_{\Pflux}
      }\dx
      \ :  
      \Div(\OptTdens_{\Pflux} \Field) = \Forcing
    \right\}
    \leq 
    \int_{\Domain}{
      \OptTdens_{\Pflux}\frac{\ABS{\Grad\Potp}^2}{2}
    }\dx ,
  \end{gather*}
  Noting that
  $\int_{\Domain}{\Forcing\Potp}\dx=\int_{\Domain}{\ABS{\Grad\Potp}^\Plapl}\dx$ 
  holds since $\Potp$ is the weak solution of~\cref{eq:plapl-eqs}, 
  using $\OptTdens_{\Pflux}=\ABS{\Grad\Potp}^{\Plapl-2}$, we obtain
  $\Ene(\OptTdens_{\Pflux})=\int_{\Domain}{\frac{\ABS{\Grad\Potp}^\Plapl}{2}}\dx$. 
  Using this equality in~\cref{eq:ineq} and noting that
  $\Plapl=(\Plapl-2)(2-\Pflux)/\Pflux$, we obtain: 
  \begin{equation*}
    \Lyap_{\Pflux}(\OptTdens_\Pflux)=\Ene(\OptTdens_\Pflux)
    +
    \Wmass_{\Pflux}(\OptTdens_\Pflux)
    =\int_{\Domain}{
      \frac{\ABS{\Grad\Pot_{\Plapl}}^{\Plapl}}{2-\Pflux}
    }\dx .
  \end{equation*}
  Thus, all the inequalities in~\cref{eq:ineq} are actually
  equalities, and we can conclude that $\Equi{\Tdens}_{\Pflux}$ is a
  minimum, which is unique since the functional $\Lyap_{\Pflux}$ is
  strictly convex.  This last assertion follows from the observation
  that $\Ene$ is convex, being the supremum of functionals that are
  linear with respect to $\Tdens$, and $\Wmass_{\Pflux}$ is strictly
  convex for $0<\Pflux<1$.
\end{proof}
\begin{Remark}
  The assumption $\Source,\Sink \in \Lspace{2}(\Domain)$ can be 
  relaxed whenever $\int_{\Domain}\Forcing \Ftest\dx $ is
  well defined for all $\Ftest\in \Sob[1]{\Plapl}(\Domain)$.
\end{Remark}
\Cref{prop-decr-lyap-pflux,prop-min-lyap-pflux} suggest the
formulation of the following conjecture for the case $0<\Pflux<1$:
\begin{Conject}
  \label{conj:pflux-lt-1}
  For $0\le\Pflux\le1$ and for any initial data $\TdensIni$, the pair
  $(\Tdens(t),\Pot(t))$, solution of the extended dynamic
  Monge-Kantorovich equations~\cref{eq:sys-pflux}, converges to the
  pair $(\ABS{\Grad\Potp}^{\Plapl-2},\Potp)$, where $\Pot_p$ is the
  solution of the $\Plapl$-Poisson equation with
  \begin{equation} \label{eq:pflux-plapl}
    \Plapl=\frac{2-\Pflux}{1-\Pflux}.
  \end{equation}
\end{Conject}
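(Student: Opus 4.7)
The plan is to establish convergence via a LaSalle-type invariance argument, exploiting the two ingredients provided by~\cref{prop-decr-lyap-pflux,prop-min-lyap-pflux}: the strictly decreasing Lyapunov functional $\Lyap_{\Pflux}$ and its unique, strictly convex minimizer $\OptTdens_{\Pflux}=\ABS{\Grad\Potp}^{\Plapl-2}$. Assuming global existence, uniqueness, and strict positivity of the solution pair $(\Tdens(t),\Pot(t))$ for all $t\ge 0$ (itself a nontrivial open problem), the argument splits into the following steps.

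First, I would obtain uniform-in-time bounds on the trajectory. From $\Lyap_{\Pflux}(\Tdens(t))\le\Lyap_{\Pflux}(\TdensIni)$ and the nonnegativity of $\Ene$, one gets $\Wmass_{\Pflux}(\Tdens(t))\le\Lyap_{\Pflux}(\TdensIni)$; since the exponent $(2-\Pflux)/\Pflux$ is strictly greater than $1$ for $0<\Pflux<1$, the functional $\Wmass_{\Pflux}$ is coercive on $\Lplus{(2-\Pflux)/\Pflux}(\Domain)$, yielding a uniform bound $\NORM{\Tdens(t)}_{\Lspace{(2-\Pflux)/\Pflux}(\Domain)}\le C$. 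Integrating~\cref{eq:lie-der} in $t$ shows that the total dissipation is finite and $\Lyap_{\Pflux}(\Tdens(t))\searrow\ell$ for some $\ell\ge\Lyap_{\Pflux}(\OptTdens_{\Pflux})$.

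Second, by Banach--Alaoglu, along any sequence $t_n\to\infty$ a subsequence $\Tdens(t_{n_k})$ converges weakly in $\Lspace{(2-\Pflux)/\Pflux}(\Domain)$ to some $\bar\Tdens\in\Lplus{(2-\Pflux)/\Pflux}(\Domain)$. The functional $\Ene$ is weakly lower semicontinuous, being a supremum of affine-in-$\Tdens$ functionals by~\cref{eq:energy-duality}, and $\Wmass_{\Pflux}$ is convex and continuous on $\Lspace{(2-\Pflux)/\Pflux}$, so $\Lyap_{\Pflux}(\bar\Tdens)\le\ell$. Exploiting the integrability of the dissipation in~\cref{eq:lie-der}, one would then show that $\bar\Tdens$ satisfies the stationary relations~\cref{eq:steady}, hence by~\cref{prop-min-lyap-pflux} coincides with the unique minimizer $\OptTdens_{\Pflux}$. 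Strict convexity of $\Lyap_{\Pflux}$ upgrades subsequential convergence to convergence of the whole trajectory, and continuous dependence of $\PotOf{\Tdens}$ on $\Tdens$ in the elliptic equation~\cref{eq:sys-pflux-div} delivers convergence of $\Pot(t)$ to $\Potp$.

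The hardest step is passing to the limit in the nonlinear stationary relation~\cref{eq:steady} on the set $\{\OptTdens_{\Pflux}=0\}$, where $\Plapl=(2-\Pflux)/(1-\Pflux)>2$ makes the limit $\Plapl$-Laplace equation~\cref{eq:plapl-eqs} degenerate: stability of the weighted elliptic operator in this regime typically requires strong rather than weak compactness of $\Tdens(t_{n_k})$, which the bounds above do not immediately supply. A promising alternative, better suited to handle such degeneracies, would be to interpret~\cref{eq:sys-pflux-dyn} as a formal gradient flow of the strictly convex $\Lyap_{\Pflux}$ in a $\Tdens$-weighted metric on nonnegative densities and to deduce convergence from the general theory of gradient flows of convex functionals, bypassing the need for pointwise compactness. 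The cases $\Pflux=1$, already treated in~\cite{Facca-et-al:2018}, and the degenerate endpoint $\Pflux=0$ would need separate treatment.
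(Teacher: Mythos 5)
You should first note that the paper itself contains no proof of this statement: it is presented as \cref{conj:pflux-lt-1}, supported only by \cref{prop-decr-lyap-pflux} (the \LCF\ $\Lyap_{\Pflux}$ decreases along trajectories), \cref{prop-min-lyap-pflux} (its unique minimizer is $\ABS{\Grad\Potp}^{\Plapl-2}$ with $\Plapl=(2-\Pflux)/(1-\Pflux)$), and the numerical experiments of the following section; the authors explicitly assume existence of the trajectory and leave convergence open. So your text must be judged as a proposed strategy, not compared against an existing argument.

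As a strategy, the LaSalle-type scheme is natural and consistent with what the two propositions suggest, but it does not close the conjecture, and the decisive step is exactly the one you flag without resolving. Concretely: (i) you assume global-in-time existence, uniqueness and strict positivity of $(\Tdens(t),\Pot(t))$, which the paper states is open (only local existence under H\"older regularity of $\TdensIni$ is available, and even that is delicate here); (ii) the claim that ``exploiting the integrability of the dissipation in~\cref{eq:lie-der} one would then show that $\bar\Tdens$ satisfies the stationary relations~\cref{eq:steady}'' is where the argument genuinely breaks: finite total dissipation only gives vanishing dissipation along some sequence, and weak $\Lspace{(2-\Pflux)/\Pflux}$ convergence of $\Tdens(t_{n_k})$ does not pass to the limit in the nonlinear, nonlocal integrand of~\cref{eq:lie-der}, which involves $\Grad\PotOf{\Tdens(t)}$ (depending on $\Tdens$ through a possibly degenerate elliptic problem) and the negative power $\Tdens^{(1-\Pflux)/\Pflux}$; moreover~\cref{eq:steady} is only informative on $\{\bar\Tdens>0\}$, so even granting the limit relation you cannot identify $\bar\Tdens$ with $\OptTdens_{\Pflux}$ on the degenerate set without the strong compactness you admit you lack. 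The alternative you offer --- reading~\cref{eq:sys-pflux-dyn} as a gradient flow of $\Lyap_{\Pflux}$ in a $\Tdens$-weighted metric --- is only formal: no metric space, geodesic convexity, or EVI/minimizing-movement structure is exhibited, so ``the general theory of gradient flows of convex functionals'' cannot yet be invoked. Two further inaccuracies: convergence for $\Pflux=1$ is \emph{not} proved in Facca et al.\ 2018 (it is conjectured there as well), and the conjecture as stated includes the endpoints $\Pflux=0$ and $\Pflux=1$, the latter with $\Plapl=\infty$, where \cref{prop-min-lyap-pflux} does not apply, so your scheme cannot cover the full stated range even in outline.
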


\begin{Remark}
  We want to emphasize two remarkable facts
  regarding~\cref{conj:pflux-lt-1}.  For $\Pflux\Tendsto 1$ the
  exponent of the $\Plapl$-Laplacian tends to infinity
  ($\Plapl\Tendsto +\infty$), coherently with the fact that the
  \MKEQS\ are the limit of the $\Plapl$-Poisson problem, as already
  shown, e.g., in~\cite{Evans-Gangbo:1999}.  At the same time, the
  exponent $\PPP=2-\Pflux$ tends to $1$, in agreement with the
  equivalence between the \MKEQS\ and Beckmann Problem.  When
  $\Pflux\Tendsto 0$ then $\Tdens(t)\Tendsto 1$ and $\Pot(t)$
  converges to the solution of a classical Poisson problem
  ($\Plapl=2$).  Thus it is possible to include also the value
  $\Pflux=0$ in~\cref{conj:pflux-lt-1},
\end{Remark}

\subsection{Case $\Pflux>1$}
\label{sec:pflux-gt-1}

In this section we discuss our attempts to extend the arguments
presented in the previous section to the \BT\ problem.  We are
particularly interested in understanding if the functional
$\Lyap_\Pflux(\Tdens(t))$, which by \cref{prop-decr-lyap-pflux} is
decreasing in time, admits a minimum as $t\Tendsto\infty$ and that
this minimum is actually attained at $\OptTdens$.  However, in this
case the \LCF\ is strongly non-convex, suggesting that several local
minima exist, as indicated also by the numerical results reported in
the next section that show strong dependence upon the initial data
$\TdensIni$.  Mostly formal calculations and several numerical
experiments consistently point towards the existence of a strong
connection between \BTP\ and the proposed formulation, but we are not
able to exactly identify the \BTP\ equivalent to the minimization of
$\Lyap_\Pflux$.  In fact, the first part of the proof
of~\cref{prop-min-lyap-pflux} remains valid, at least for the case
$1<\Pflux<2$, in which the exponent $\PPP$ remains positive.  Indeed,
from~\cref{eq:ineq} the following sequence of inequalities is derived:
\begin{align*}
  \inf_{\Vel \in \Vof{\Lspace{\PPP}(\Domain)}{\Dim}}&
  \left\{
    \int_{\Domain}{\frac{\ABS{\Vel}^{\PPP}}{\PPP}}\dx
     \ :  \Div(\Vel) =\Forcing
  \right\}
  \\
  \nonumber
  \leq
  &\inf_{\Tdens,\Field}
  \left\{
    \int_{\Domain}{\frac{\ABS{\Field\Tdens}^{\PPP}}{\PPP}}\dx
    \ : (\Tdens,\Field)\in
    \Lplus{\frac{2-\Pflux}{\Pflux}}(\Domain)\times
        \Vof{\Lspace[\Tdens]{2}(\Domain)}{\Dim},
    \quad
    \Div(\Field \Tdens) =\Forcing
  \right\}
  \\
  \nonumber
  \leq
  &\inf_{\Tdens  \in \Lplus{\Pwmass[\Pflux]}(\Domain)}\Lyap_{\Pflux}(\Tdens).
\end{align*}
\replaced{This optimization problem resembles the \BTP\ formulation in
  \cref{eq:branch-intro}.}{
  This optimization problem resembles the \BTP\ proposed
  by~\cite{Xia:2003}, with the exponent $\PPP$ playing the role of the
branch exponent $\Pbranch$.}  However, we have to highlight an
important difference between the two problems. The formulation
in~\cite{Xia:2003} uses integrals computed with respect to a Hausdorff
measure that well adapts to the singular structures arising in
\BT. Our computations, on the other hand, are made always with
respect to the Lebesgue measure, indicating that relation
$\PPP=2-\Pflux$ does not hold.  
Moreover, for $\Pflux\geq 2$, $\Lyap_{\Pflux}$ is not well defined for
$\Tdens$ attaining zero on some regions of $\Domain$, while we expect
that the asymptotic $\OptTdens$ will be zero on large portions of the
domain.  These two elements suggest that a proper re-formulation of
the \LCF\ is required for $\Pflux>1$.  One possible strategy to
reconcile our inability to address singular measures is inspired by
the Modica-Mortola approach, effectively used
in~\cite{Santambrogio:2010,Oudet-Santambrogio:2011}.  The main idea is
to introduce a parameter $\Eps>0$ in~\cref{eq:young} and use Young
inequality with $\Eps$ as parameter in order to weight differently the
energy and mass terms, $\Ene(\Tdens)$ and $\Wmass_{\Pflux}(\Tdens)$,
in $\Lyap_{\Pflux}(\Tdens)$.  Intuitively, $\Eps$ should be raised to
a suitable power that allows the different energy and mass components
to scale correctly. The main difficulty lies in the identification of
the proper scaling power, but this identification is at the moment
still elusive.

Despite these difficulties, together with some not completely
understood theoretical issues, we present in the next section
numerical simulations that suggest that system~\cref{eq:sys-pflux}
admits a steady state equilibrium $(\Equi{\Tdens},\Equi{\Pot})$, where
the supports of the numerical solutions $\OptTdensH$ seem to
approximate the typically singular (low-dimensional) formations
emerging in \BT\ problems and the vector field
$\Equi{\Vel}=-\Equi{\Tdens}\Grad\Equi{\Pot}$ solve the \BTP\ as
formulated in~\cite{Xia:2003}. 

\section{Numerical solution of the Extended \DMK\ equations}
\label{sec:numeric-pflux}

In this section we report several two-dimensional numerical tests in
support of our conjectures. In particular we try to approximate
explicitly the steady state solution of~\cref{eq:sys-pflux} and look
at the qualitative behavior of the equilibrium configurations for
$t \Tendsto +\infty$ in the \CTP\ ($0<\Pflux<1$) and \BTP\
($\Pflux>1$) cases.
For $0<\Pflux<1$ we test \cref{conj:pflux-lt-1} comparing our results
with an exact solution $\Potp$ of the $\Plapl$-Poisson equation.  For
the case $\Pflux>1$, several experiments point to the existence of a
connection between the large-time solution of our model and \BTP\
solutions.  For different values of $\Pflux$ and varying types of
source terms $\Source$ and $\Sink$, including two largely different
test cases employing distributed and a point sources, our dynamics
invariably converges to an equilibrium point in correspondence of
which $\Tdens$ always displays branching singular structures.

\subsection{The Numerical Approach}\label{sec:num-met}

The discretization method is based on the Finite Element approach
described in~\cite{Facca-et-al:2018,Facca-et-al:sisc:2018}.
\Cref{eq:sys-pflux-dyn} is projected into a piecewise constant finite
dimensional space defined on a triangulation
$\Triang[\MeshPar](\Omega)$ of the domain $\Omega\subset\REAL^{\Dim}$.
The elliptic equation in~\cref{eq:sys-pflux-div} is discretized using
linear conforming Galerkin finite elements defined on a mesh
$\Triang[\MeshPar/2](\Omega)$ obtained by uniform refinement of
$\Triang[\MeshPar](\Omega)$ (approach called
$\PC{1,\MeshPar/2}-\PC{0,\MeshPar}$).  We choose this spatial
discretization method for its robustness and stability as shown
in~\cite{Facca-et-al:sisc:2018}.  Accordingly, the approximate pair
$(\TdensH,\PotH)$ can be written as:
\begin{gather*}
  \PotH(t,x)=\sum_{i=1}^{\Vdim} \Pot_{i}(t)\Vbase[i](x) \quad
  \Vbase[i]\in \PONE(\Triang[\MeshPar/2]) ,
  \\
  \TdensH (t,x)= \sum_{k=1}^{\Wdim}\Tdens_{k}(t) \Wbase[k](x) \quad
  \Wbase[k]\in \PZERO(\Triang[\MeshPar]) ,
\end{gather*}
where $\PONE(\Triang[\MeshPar/2])$ and $\PZERO(\Triang[\MeshPar])$ are
piecewise linear conforming and piecewise constant FEM spaces,
respectively.  The time discretization of the projected system is
obtained by means of forward Euler time stepping.  Denoting with
$\TdVec[\tstep]$ and $\UVec[\tstep]$ the vectors collecting the values
of $\TdensH$ on the triangles and of $\PotH$ on the nodes at the
$\tstep$-th time step, the following sequence of linear systems needs
to be solved:
\begin{subequations}\label{eq:fem}
  \begin{align}
    \label{eq:lin-sys-pflux} 
    &\Matr{A}[\TdVec[\tstep]] \UVec[\tstep] = \Vect{b} ,
    \\
    \label{eq:ee-pflux}
    &\TdVec[\tstep+1] = \TdVec[\tstep]+\Deltat
      \left[
        \Matr{B}_{\Pflux}[\UVec[\tstep]]
        \left(\TdVec[\tstep]\right)^{\Pflux}
        -\TdVec[\tstep]
      \right] ,
  \end{align}
\end{subequations}
where $\Matr{A}[\TdVec[\tstep]]$ is the stiffness matrix associated to
$\TdVec[\tstep]$ and $\Matr{B}_{\Pflux}[\UVec[\tstep]]$ is the matrix
defining the norm of the gradient of $\PotH(t^\tstep,x)$ raised to the
power $\Pflux$.  The time stepping is initiated from $\TdVec[0]$
obtained by projecting the initial data $\TdensIni$ on the
$\PZERO(\Triang[\MeshPar])$ space.  The above system is then iterated
until the norm of the relative variation between two consecutive
$\TdensH$ solutions is smaller than the tolerance $\TolTime$, i.e.:
\begin{equation*}
  \Var(\TdensH^{\tstep}):=
   \frac{\NORM{\TdensH^{\tstepp}-\TdensH^{\tstep}}_{L^2(\Domain)}} 
         {\Deltat\NORM{\TdensH^{\tstep}}_{L^2(\Domain)}})\le\TolTime .
\end{equation*}
When this occurs we assume that the equilibrium configuration has been
reached.

At each time step, the linear system in~\cref{eq:lin-sys-pflux} is
solved via Preconditioned Conjugate Gradient (\PCG) with an ad-hoc
preconditioning strategy.  This preconditioner, developed and
discussed in details in~\cite{Martinez-et-al:2017}, exploits the
time-stepping sequence to devise an efficient and robust deflation
strategy, whereby partial eigenpairs are evaluated to improve the
spectral properties of the preconditioned linear system. This approach
is fundamental to achieve convergence of the iterations in particular
for the case $\Pflux>1$.

\begin{Remark}
  It is natural to impose a lower bound on $\TdensH$ (say $10^{-10}$)
  to bound from below the smallest eigenvalue of the stiffness matrix
  in the FEM formulation and limit its condition number. This was
  indeed our first attempt. However, we quickly realized that our
  approach is not influenced by this lower bound. In fact, the
  solution of the linear system at each time step is the same and
  requires exactly the same number of PCG iterations irrespectively of
  the presence of the lower bound. Also the dynamics of $\TdensH$ and
  of the corresponding $\Lyap_{\Pflux}$ for $0<\Pflux<2$ are not
  affected by the lower bound. On the other hand, in the case
  $\Pflux\ge2$, the lower bound influences the value of the \LCF\,
  which tends to $-\infty$ as $\Tdens\rightarrow 0$.
\end{Remark}

\subsection{Numerical Experiments}
\subsubsection{Case $0<\Pflux\leq 1$}
\label{sec:numeric-pflux-lt-1}

\begin{figure}
  \centerline
  {
    \includegraphics[
      trim={170 0 0 0},clip,
      width=0.43\textwidth]{./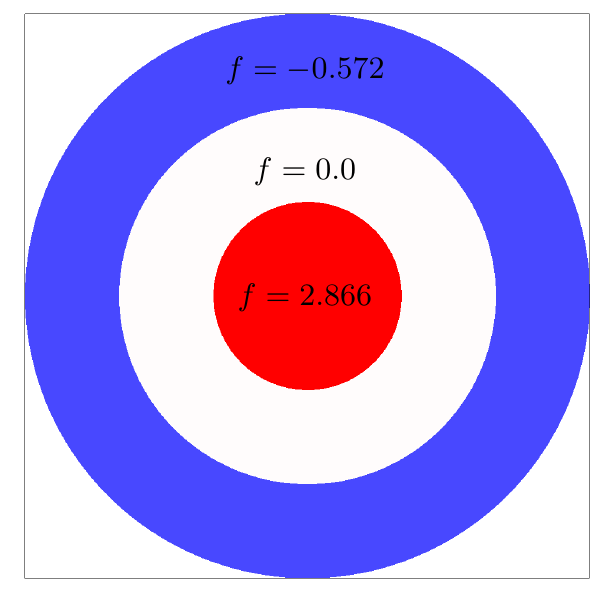}
     \includegraphics[width=0.57\textwidth]{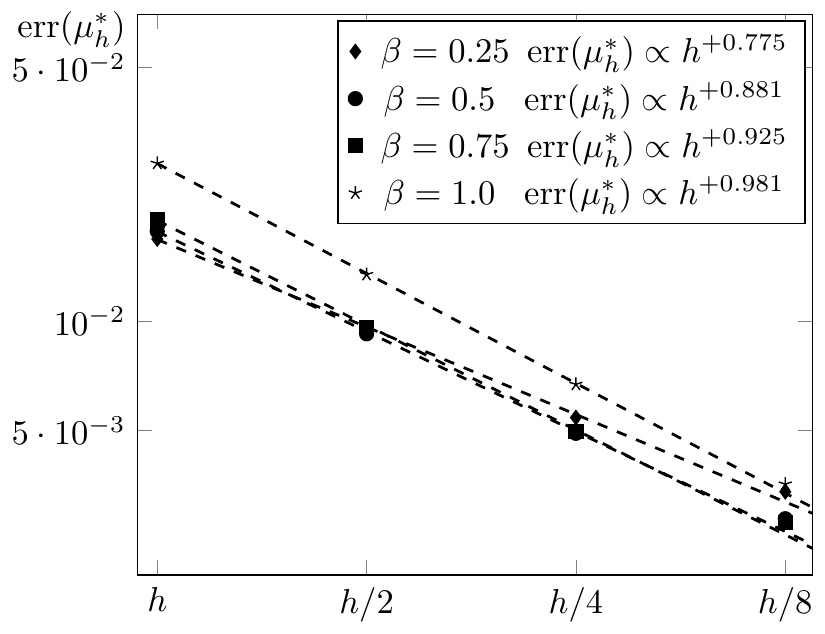}  
  }
  \caption{Left panel: \replaced{spatial distribution of
      the}{triangulation $\Triang_{\MeshPar}(\Omega)$, with 5191 nodes
      and 10179 triangles, and} forcing term
    $\Forcing(x,y)=\Fradial(r)$.  The mesh points lie on the
    concentric circles that form the boundary of the supports of
    $\Source$ and $\Sink$.  Center panel: the spatial distribution of
    $\Source$ and $\Sink$. Right panel: FEM experimental convergence
    for $\Pflux=0.25, 0.5, 0.75, 1.0$.  The legend above the figure
    reports the experimental convergence rates for each value of
    $\Pflux$. }
  \label{fig:mesh-forcing-plapl}
\end{figure}

In this series of tests we compare the long-time limit of $\TdensH$,
denoted with $\OptTdensH$, against
$\Equi{\Tdens_{\Pflux}}:=\ABS{\Grad\Pot_{\Plapl}}^{\Plapl-2}$, where
$\Pot_{\Plapl}$ is the solution of the $\Plapl$-Poisson equation, for
which an explicit formula is known, and
$\Plapl=(2-\Pflux)/(1-\Pflux)$, as given by~\cref{eq:pflux-plapl}
of~\cref{conj:pflux-lt-1}.  We consider a two dimensional example
taken from~\cite{Barrett-Liu:1993}, where the radially symmetric
forcing term is given by $\Forcing(x,y)=\Fradial(r)$ with
$r=\sqrt{x^2+y^2}$ and $\Fradial:]0,1[\mapsto \REAL$.  Under these
assumptions, the exact solution of the $\Plapl$-Poisson equation is:
\begin{equation*}
  \Pot_{\Plapl}(x,y)=U(r)=-\int_{r}^{1}
  \Sign (Z(t))\ABS{Z(t)}^{\frac{1}{\Plapl-1}}dt
  \quad
  Z(r)=-\frac{1}{r}\int_{0}^{r}t\Fradial(t)dt .
\end{equation*}
According to the relation between $\Plapl$ and $\Pflux$, we can write
the following explicit formula for $\Equi{\Tdens}_{\Pflux}$:
\begin{equation} \label{eq:exact-plapl-tdens}
  \Equi{\Tdens}_{\Pflux}(x,y)=
  \ABS{Z(r)}^{\frac{\Plapl-2}{\Plapl-1}}=\ABS{Z(r)}^{\Pflux}
\end{equation}
Note that this $\Equi{\Tdens}_{\Pflux}$ is
well defined also for $\Pflux=1$, a value corresponding to the case
$\Plapl=+\infty$.  This optimal density corresponds to the
$\OT$-density solution of the \MKEQS, a problem already considered
within our setting
in~\citet{Facca-et-al:2018,Facca-et-al:sisc:2018}.

In our numerical experiments we take $\Fradial$ as a piecewise
constant function, positive in the interval $]0,1/3[$, zero in
$[1/3,2/3]$, and negative in $]2/3,1[$.  The value of $\Fradial$ on
the positive and the negative parts is given by two constants $c_1$
and $c_2$ that are calculated to maintain orthogonality (up to machine
precision) of the right hand side $\Vect{b}$ of the linear
system~\cref{eq:lin-sys-pflux} with respect to the constant vectors.
This tuning corrects for quadrature errors and is necessary to provide
accurate approximations of the integrals and a-priori exclude errors
and inaccuracies introduced by the piecewise representations of the
circles.  The mesh $\Triang[\MeshPar]$ and the forcing term $\Forcing$
are shown in~\cref{fig:mesh-forcing-plapl}.

The numerical experiments consist in testing the existence of a steady
state $\OptTdensH$ for different values of $\Pflux$ (0.25, 0.5, 0.75,
1.0) and evaluating the error with respect to the candidate exact
solution 
%
  $\Err(\Tdens):=
  {\NORM{\Tdens-\OptTdens_{\Pflux}}_{\Lspace{2}(\Domain)}}/
       {\NORM{\OptTdens_{\Pflux}}_{\Lspace{2}(\Domain)}} .$
%
We solve the extended \DMK\ equations on a sequence of uniformly
refined grids and evaluate the experimental convergence rate by means
of $\Err(\OptTdensH)$. The sequence is built by uniform refinement of
the initial unstructured mesh $\Triang[\MeshPar](\Omega)$
\deleted{shown in
}, characterized
by 5191 nodes and 10179 triangles \added{(we generate it using
  Python package Meshpy)}.  Convergence in time is tested by looking
at the evolution of $\Var(\TdensH(t))$ and $\Err(\TdensH(t))$.
Steady-state is considered achieved when $\TolTime=5\times 10^{-7}$.

\begin{figure}
  \centerline
  {
     \includegraphics[width=0.9\textwidth]
     {./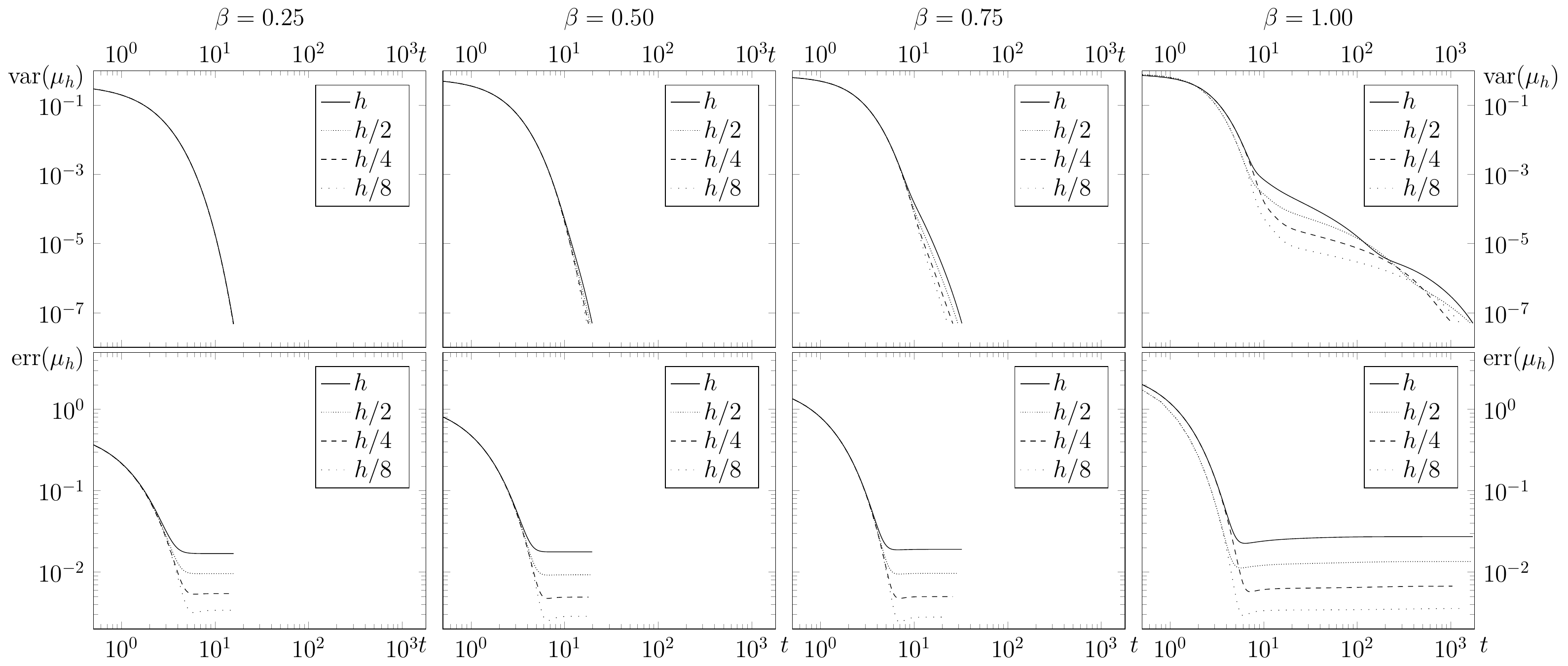}
  }
  \caption{
    Log-log plots of $\Var(\TdensH(t,\cdot))$ (upper panels)
    and $\Err(\TdensH(t,\cdot))$ (lower panels) vs. time.
    The columns refer, from left to right, to the
    results obtained with $\Pflux=0.25, 0.5, 0.75, 1.0$.
  }
  \label{fig:var-err-plapl}
\end{figure}

\begin{figure}
  \centerline
  {
     \includegraphics[width=0.9\textwidth]{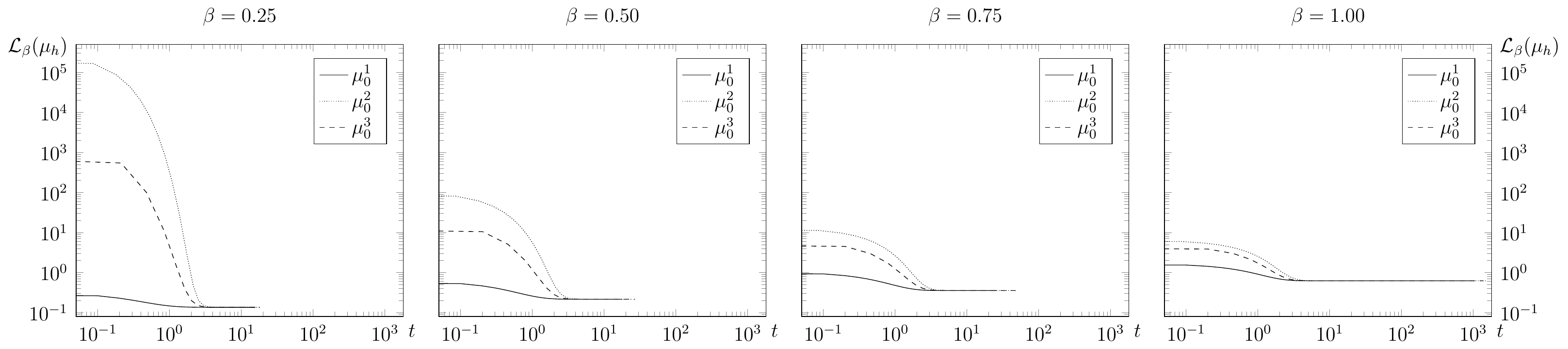}  
  }
  \caption
  { Time behavior of the \LCF\ $\Lyap_{\Pflux}(\TdensH(t)$, for
    $\Pflux=0.25, 0.5, 0.75, 1.0$ (from left to right) starting from
    three different initial data $\TdensIni$.  We report the results
    only for the coarser mesh as they do not seem to depend on mesh
    resolution.} 
  \label{fig:lyap-plapl}
\end{figure}

The experimental convergence rates are shown
in~\Cref{fig:mesh-forcing-plapl}, right panel, and vary in the range
$m=0.775\div0.981$ for $\Pflux=0.25\div1$, respectively, thus
displaying optimal convergence of the spatial discretization.
\Cref{fig:var-err-plapl} shows the log-log plot of the time-variation
$\Var(\TdensH(t))$ (top row) and the relative error $\Err(\TdensH(t))$
(bottom row) as a function of time for the same values of $\Pflux$.
From the first set of plots we can see that, as time increases, the
variation tends towards zero as power-law with a rate that is
independent of the mesh level and decreases as the power $\Pflux$
increases. In other words, for any tested mesh, the smaller $\Pflux$
the faster the equilibrium configuration is reached.  The case
$\Pflux=1$ shows the slowest convergence towards steady-state and some
influence of the mesh resolution appears. This is an evident signal of
the difficulty of the MK problem.  The relative error
(\Cref{fig:var-err-plapl}, lower row) stagnates at a relatively small
time reaching values that decrease at a constant factor with the mesh
level, coherently with the experimental convergence rates previously
calculated. Note that, for all practical purposes, the time tolerance
$\TolTime$ could be increased to much bigger values without affecting
the $\Err(\OptTdensH)$, quantity that remains essentially stationary
in all simulations after $t=10$, i.e., when
$\Var(\TdensH(t)\in [10^{-3},10^{-4}]$.  However, for reasons of
numerical testing, all our simulations are continued until the
indicated tolerance $\TolTime$ is achieved.

To conclude our exploration of this case, we look at the time
evolution of the \LCF\ $\Lyap_{\Pflux}(\TdensH(t))$ starting from
three different initial data $\TdensHIni[i]\ (i=1,2,3)$.
\Cref{fig:lyap-plapl} shows the numerical results obtained for the
uniform initial condition $\TdensHIni[1]=1$ and for the
$\TdensHIni[2,3]$ distributions reported later in the left panel
of~\cref{fig:final-pflux-150-td23}.  In all simulations
$\Lyap_{\Pflux}(\TdensH(t))$ decreases monotonically and always
attains the same minimum value independently of the initial
conditions. For all the starting points, the value of
$\Lyap_{\Pflux}(\TdensH(t))$ becomes numerically stationary before
$t=10$. However, its value continues to decrease but at progressively
lower rates.  Overall, these results provide convincing support of the
correctness of~\cref{conj:pflux-lt-1}.

\subsubsection{Case $\Pflux>1$}
\label{sec:numeric-pflux-gt-1}

In this section we discuss our numerical results related to ramified
transport by looking at some qualitative features that the solutions
emerging from our proposed model share with more classical \BT\
solutions reported e.g. in~\citet{Xia:2015}. We explore the numerical
features of the discretization algorithm and its robustness by varying
the exponent $\Pflux$, the mesh size parameter $\MeshPar$, and the
initial conditions $\TdensHIni$.  We look at the time-convergence of
the solution towards an equilibrium point and at the behavior of the
\LCF\ $\Lyap_\Pflux$ as $t\Tendsto\infty$.  All these results are
critically assessed and are here submitted in support of our
conjecture on the connection between the asymptotic
configuration of our dynamics and the solution of the \BTP.

To this aim, we consider three different test cases, TC1, TC2, and TC3
defined on the same domain $\Omega=[0,1]\times[0,1]$ and characterized
by varying the forcing function.  In TC1, $\Source$ and $\Sink$ are
two constant functions with equal value and supports in the
rectangular areas $[1/8,3/8]\times[1/4,3/4]$ and
$[5/8,7/8]\times[1/4,3/4]$, respectively.  TC2 considers $\Source$
formed by $50$ Dirac source points randomly distributed in the region
$\Domain=[0.1,0.9]\times[0.1,0.9]$, while the sink term $\Sink$ is a
single Dirac mass located at $(0.05,0.05)$ with intensity that
balances $\Source$.  TC3 simulates one Dirac source transporting
towards two Dirac sinks, yielding $\Source=\Dirac{(0.5,0.1)}$) and
$\Sink=0.5 \Dirac{(0.4,0.9)} + 0.5 \Dirac{(0.6,0.9)}$.  For this last
case, the exact solution of
\replaced{~\cref{eq:branch-intro}}{
  in the
  sense of~
} is known as a function of the exponent
$\Pbranch$ and will be considered as reference solution. However, as
already noticed in~\cref{sec:pflux-gt-1}, we do not possess the exact
relationship between the exponent $\Pflux$ of our \DMK\ approach and
$\Pbranch$ of the standard \BT\ formulation, and thus only a
qualitative comparison is meaningful.

Sensitivity to initial data is tested for all three TCs by employing
the same $\TdensHIni[i]\ (i=1,2,3)$ used in the previous example.  In
addition, for TC3, we use initial data concentrated along the
reference solution to verify that the dynamics will not move the
solution away from a ``true'' initial guess.  We report the results
for different values of $\Pflux\in\{1.1, 1.5, 2.0, 3.0\}$.  In this
case of ramified transport, we expect $\OptTdensH$ to concentrate on
supports that tend to become progressively singular with respect to
the Lebesgue measure as the mesh is refined. Intuitively, the
numerical transport density should tend towards zero outside these
supports, while it remains positive and may grow indefinitely within
these singular sets. This behavior is magnified as $\Pflux$ grows.
Correspondingly, the ill-conditioning of the linear system
in~\cref{eq:lin-sys-pflux} grows, signaled by the large increase of
the condition number of the system matrix $\Matr{A}$ well beyond the
possibilities of current linear system solvers.  This phenomenon is
intensified as $\Pflux$ increases. For this reason we do not address
values of $\Pflux$ larger than 3.  For the considered simulations,
convergence of the conjugate gradient method is achieved only when
using the ad-hoc preconditioner based on spectral information quickly
described at the end of~\cref{sec:num-met}, developed and thoroughly
tested in~\citep{Martinez-et-al:2017}.

We experimentally test the numerical spatial convergence of the
simulator by solving the same problem on successive refinements of an
initial triangulation $\Triang[\MeshPar]$. For TC1 we use an initial
grid of $1615$ nodes and $3100$ triangles, aligned with the supports
of $\Source$ and $\Sink$, while for TC2 the initial mesh is
characterized by $1661$ nodes and $3192$ triangles.  The $51$ points
where the $\Forcing$ is concentrated coincide with grid nodes.  Again,
for all practical purposes, the value $\TolTime=5\times 10^{-7}$ is
more than enough to reach the equilibrium configuration.  We discuss
the numerical behavior of the model by running simulations with
$\Pflux=1.5$ as a representative example. We look at convergence
towards equilibrium, spatial experimental convergence, behavior of the
\LCF, and sensitivity to initial conditions. For all these tests, the
behavior of the numerical solution and the convergence properties of
the discretization approach are similar also for the other tested
values of $\Pflux$.  In all the numerical simulations, we experimented
strong and sudden variations of $\TdensH$, since the term
$\Delta\TdensH^\tstep=\Matr{B}_{\Pflux}[\UVec[\tstep]]
\left(\TdVec[\tstep]\right)^{\Pflux} -\TdVec[\tstep]$
in~\cref{eq:ee-pflux} can rapidly increase by several orders of
magnitude.  This effect is amplified for larger values of $\Pflux$.
To preserve the stability of the forward Euler scheme we use a time
step $\Deltat$ whose size is tuned according to term
$\Delta\TdensH^\tstep$.

\begin{figure}
  \centerline{
    \includegraphics[width=0.38\textwidth]{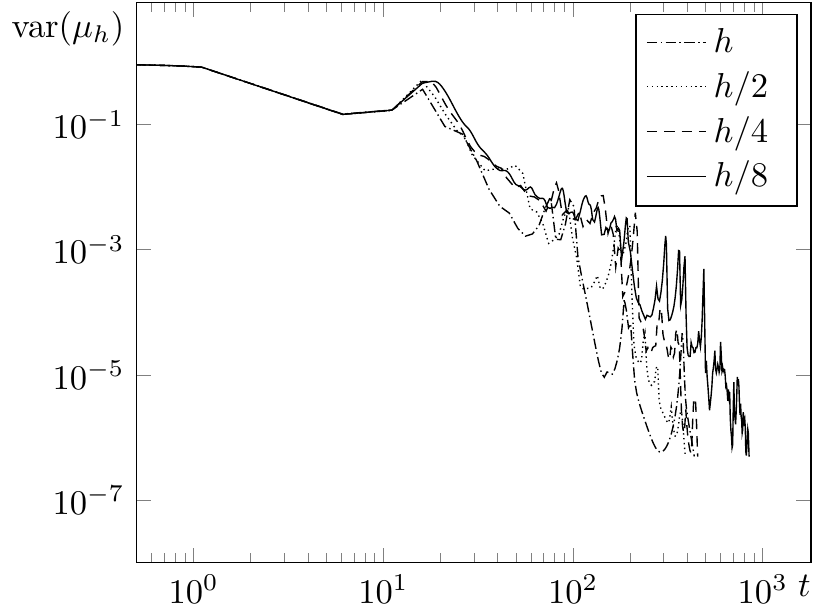}
    \includegraphics[width=0.38\textwidth]{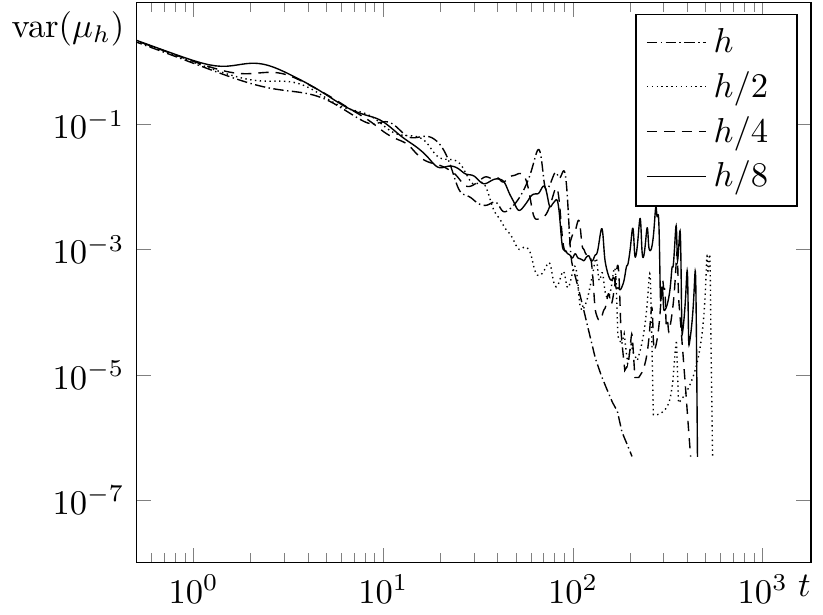}
  }
  \caption{Case $\Pflux=1.5$, $\TdensHIni\equiv 1$.  Time evolution of
    $\Var(\TdensH(t))$ on successive mesh refinements for TC1 (left)
    and TC2 (right).}
  \label{fig:var-150}
\end{figure}

\paragraph{Convergence towards equilibrium}
We first start our discussion by looking at the time evolution of
$\Var(\TdensH(t))$ with uniform initial data $\TdensHIni\equiv 1$ for
both TC1 and TC2 (\cref{fig:var-150}). The behavior is globally
decreasing but not monotone, unlike the case $\Pflux \leq 1$.  After a
reasonably smooth initial transient, the time evolution of
$\Var(\TdensH(t))$ presents oscillations with a frequency that
increases at increasing refinement levels.  Despite this irregular
behavior, in both TCs all simulations seem to converge towards an
equilibrium configuration $\left(\OptTdensH,\OptPotH\right)$ for all
values of $\Pflux$ and for every grid and initial data $\TdensHIni$
considered, thus supporting our conjecture that the proposed
dynamics converge towards a steady state.

\begin{figure}
  \centerline{
    \includegraphics[width=0.8\textwidth]{./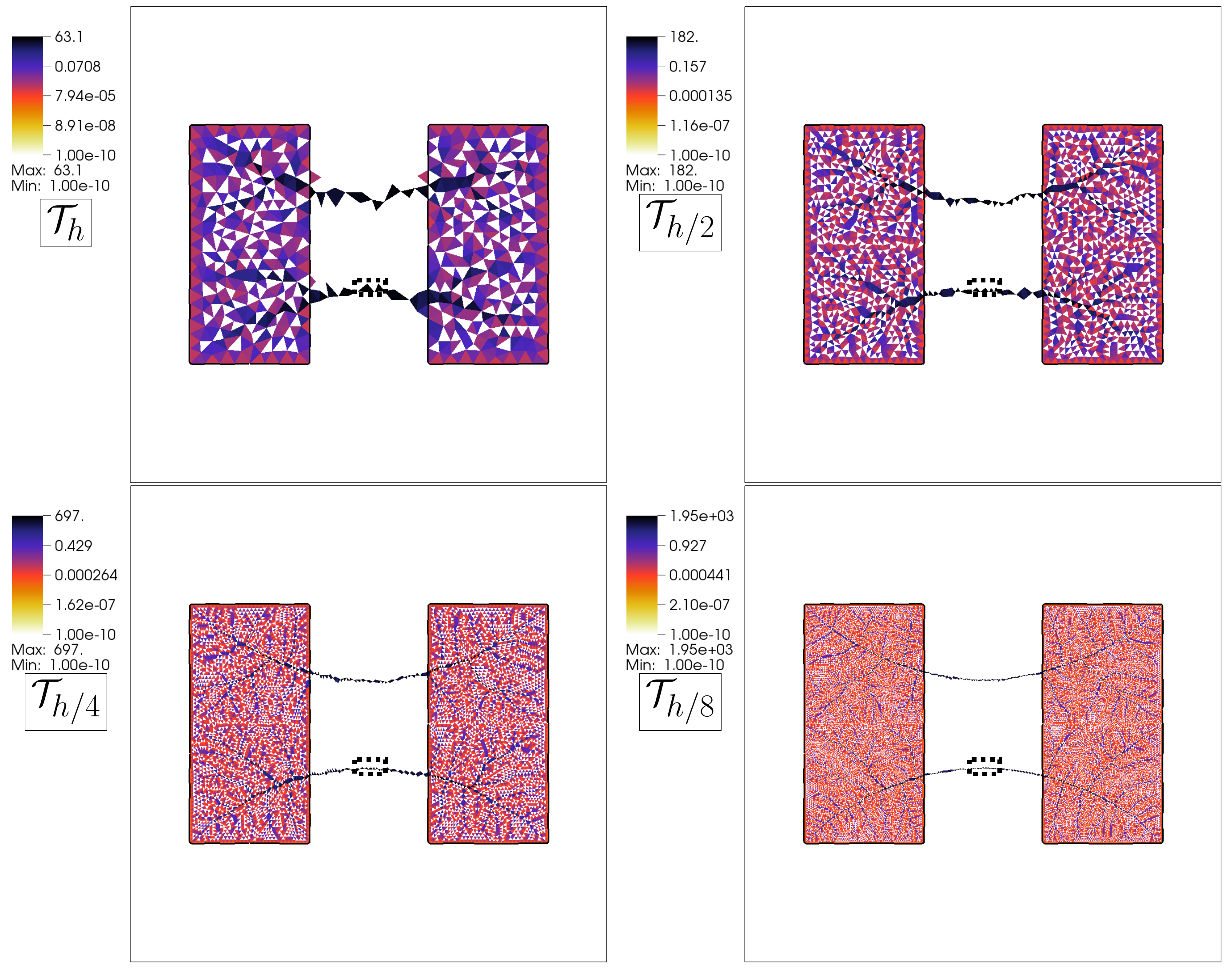}
  }
  \caption{TC1: Numerical approximation $\OptTdensH$ for
    $\Pflux=1.5$ and $\TdensHIni=1$ (logarithmic color scale).  The
    supports of $\Source$ and $\Sink$ are contoured in black.  Four
    successive mesh levels are shown. The dotted rectangle in the lower
    central channel indicate the zoom window displayed
    in~\cref{fig:zoom-150}. The color scale is limited at the minimum
    threshold of $10^{-10}$.}
  \label{fig:rect-150}
\end{figure}

\begin{figure}
  \centerline{
    \includegraphics[width=0.8\textwidth]{./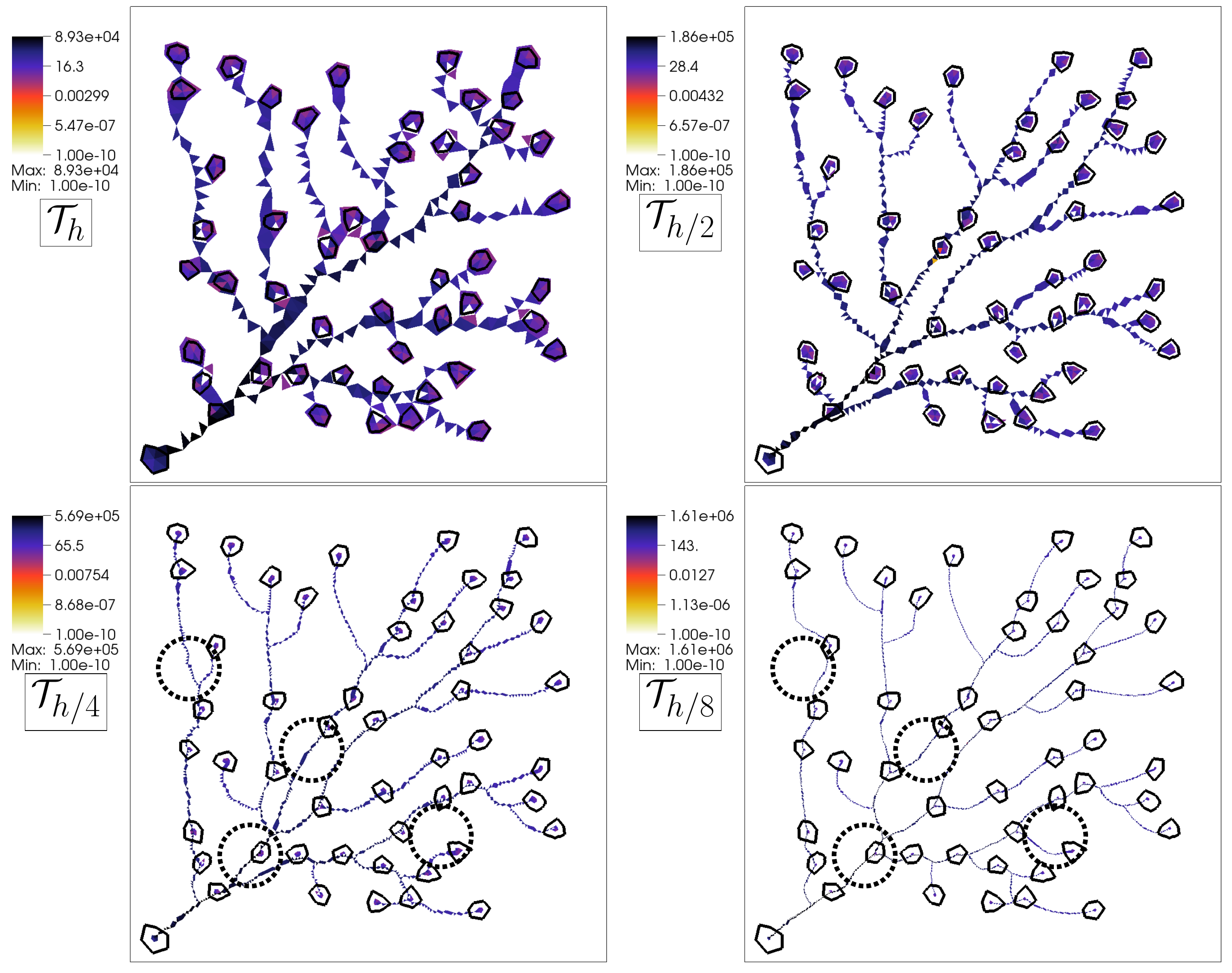}
  }
  \caption{TC2: Numerical approximation $\OptTdensH$ for $\Pflux=1.5$
    (logarithmic color scale).  The initial data $\TdensHIni$ is
    uniformly equal to $1$ on the entire domain.  The small black
    circles indicate the approximate position of the Dirac masses.
    Four successive mesh-refinement levels are shown.  In the bottom
    panels we have indicated with dashed circles the location of
    topological changes between two successively refined network
    structures.  The color scale is limited at the minimum
    threshold of $10^{-10}$.}
  \label{fig:tree-150}
\end{figure}

The spatial distributions of the limit equilibrium configurations
$\OptTdensH$ are shown in~\cref{fig:rect-150,fig:tree-150} for
$\Pflux=1.5$ and $\TdensHIni\equiv 1$ at successive grid refinements.
Looking at the transient (not shown here), we observe supports of
$\TdensH$, defined as the union of all triangles in $\Triang[h]$ where
$\TdensH$ is above a minimal threshold of $10^{-10}$, that initially
coincide with $\Omega$ but then, as time progresses, tend to create
singular structures where $\OptTdensH$ self-organizes in narrow
channels connecting the supports of $\Source$ and $\Sink$.  These
emerging networks present a hierarchical structure in which channels
with higher flow capacity, determined by the values of $\OptTdensH$,
repetitively branch into sub-channels until the whole support of
$\Forcing$ is covered.  The connection between disjoint $\Forcing$
supports is ensured by the formation of a limited number of
concentrated channels.  These emerging structures seem to approach a
singular (one-dimensional) tree-like network, where loops are absent.

Notwithstanding the evident difficulty of comparing $\OptTdensH$ at
different refinement levels, an underlying limit network is clearly
appearing for all tested values of $\Pflux$, grid level, or initial
data $\TdensHIni$.  In particular for TC1, we see
in~\cref{fig:rect-150} that, inside the supports of $\Forcing$,
$\OptTdensH$ forms branching structures with seemingly fractal
features.  In the region outside the supports of $\Forcing$,
$\OptTdensH$ concentrates on a series of connected triangles that
create a tight channel with high conductivity linking the source and
the sink regions.  These effects persists at each refinement level,
and the support of $\OptTdensH$ seems to approximate a one-dimensional
network.  In TC2 we clearly perceive another phenomenon whereby
several branches in the $\OptTdensH$ tree depart from the expected
rectilinear behavior.  We attribute this occurrence to a problem of
mesh-alignment of the numerical solution, presumably to be ascribed to
the extreme spatial irregularity of $\OptTdensH$.  We will discuss
this difficulty later on. However, we are positively surprised by the
capabilities of our numerical scheme to reproduce, albeit with
inaccuracies, these singular structures.  A final remark on the
emerging structure concerns the occurrence of topological changes
between the singular $\OptTdensH$ spatial distributions at different
mesh refinement levels. This is mostly apparent in TC2, where we have
highlighted with dotted circles substantial changes in the network
topology as $\MeshPar$ decreases.  This phenomenon is an indication of
the sensitivity to initial conditions of our \BT\ model, and
correspondingly, to the presence in the \LCF\ of several local minima
to which our dynamics is attracted. This sensitivity will be analyzed
in a later section.

\begin{figure}
  \centerline{
    \includegraphics[trim={2.65cm 25cm 10.5cm 2.1cm},clip,
       width=0.39\textwidth]{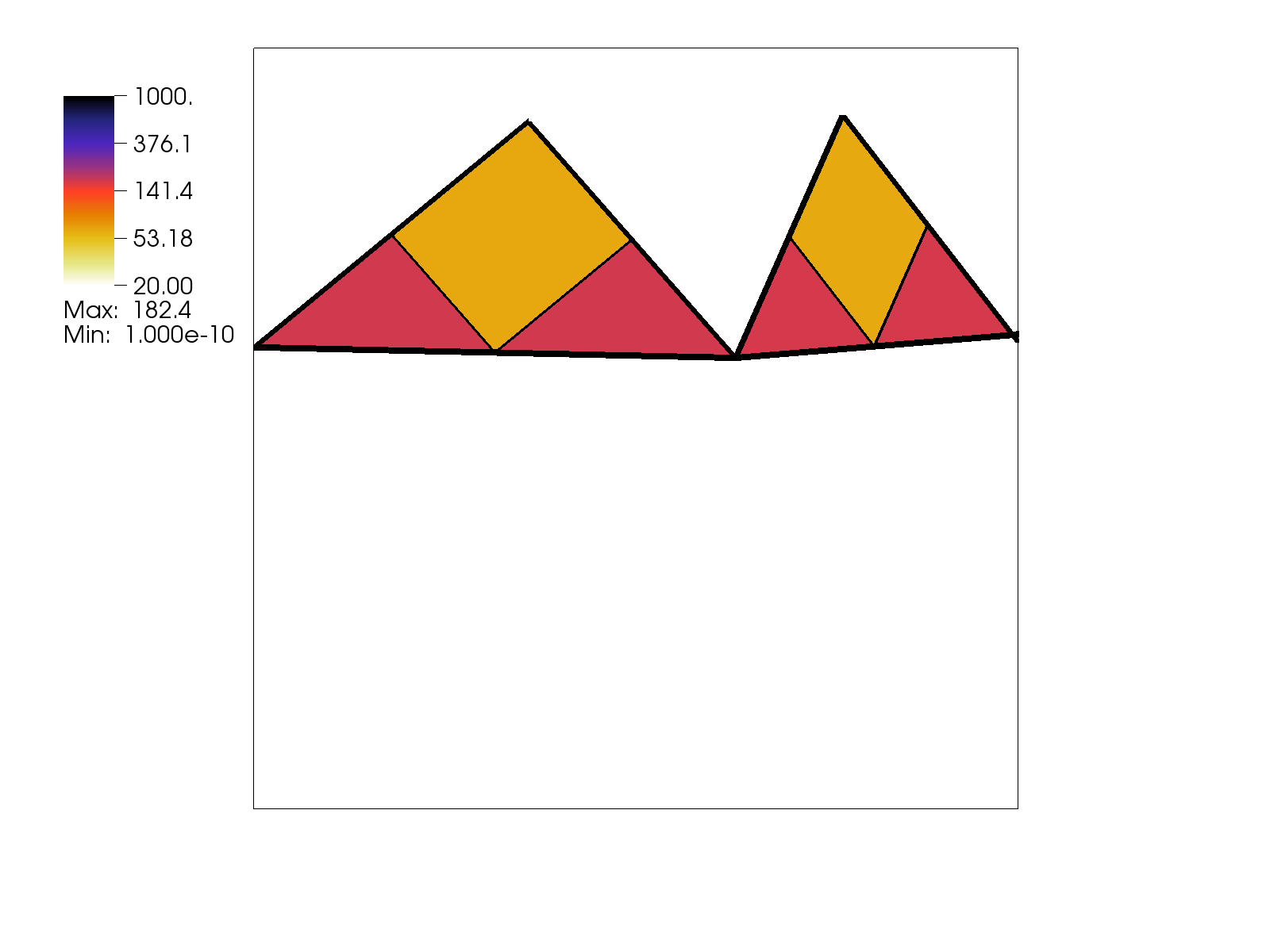}
    \includegraphics[trim={2.65cm 25cm 10.5cm 2.1cm},clip,
       width=0.39\textwidth]{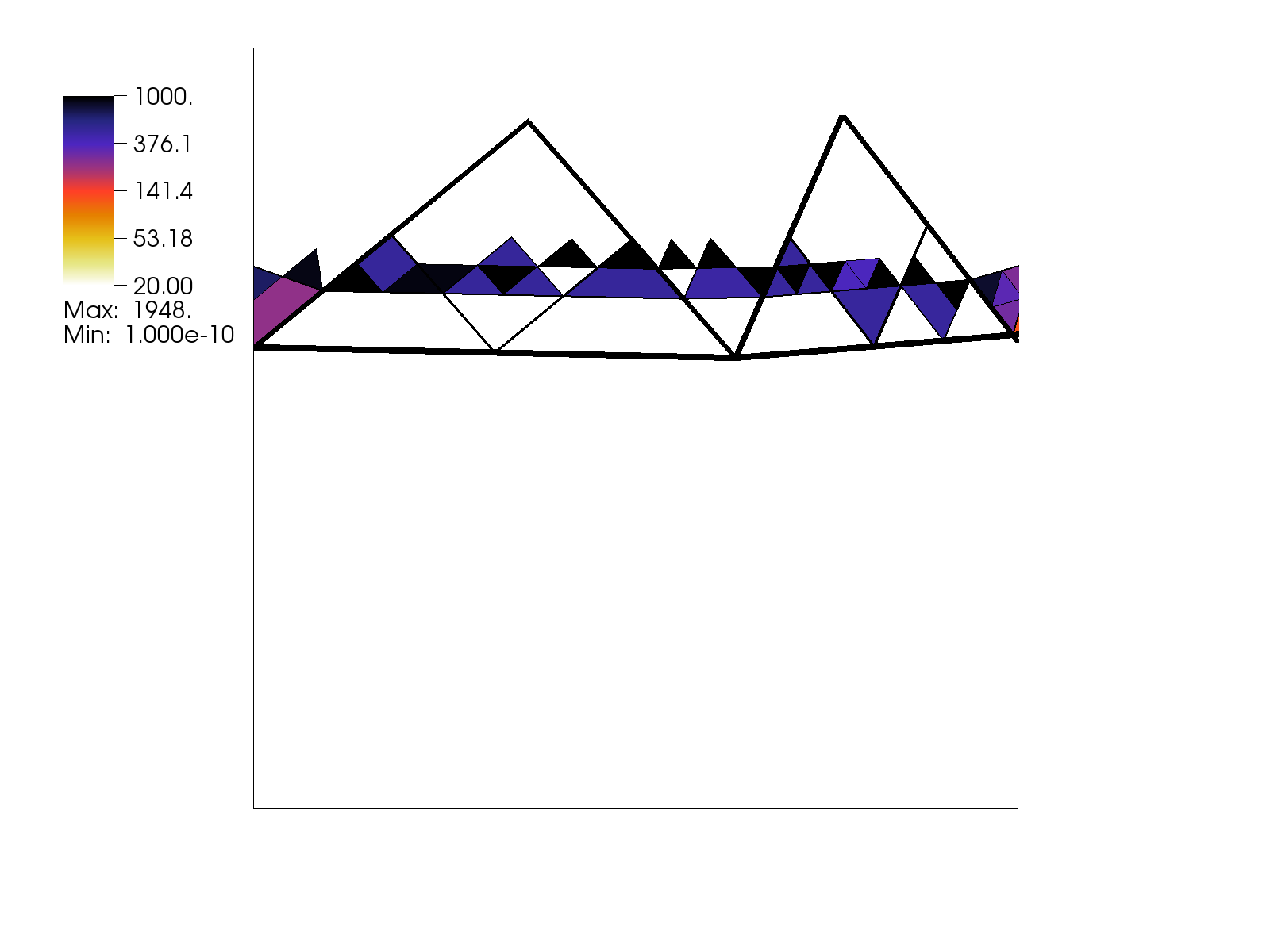}
  }
  \caption{TC1: behavior of $\OptTdensH$ in the zoomed areas located
    in the central channels identified as dotted ovals
    in~\cref{fig:rect-150} (logarithmic color scale).  The left panel
    reports the superposition of $\OptTdensH$ for triangulations
    $\Triang[\MeshPar]$ and $\Triang[\MeshPar/2]$, while the right
    panel superimposes $\Triang[\MeshPar/4]$, and
    $\Triang[\MeshPar/8]$.  Only the triangles where $\OptTdensH$ is
    above the threshold $10^{-10}$ are shown.  }
  \label{fig:zoom-150}
\end{figure}

\paragraph{Experimental convergence of spatial discretization}
Convergence with respect to $\MeshPar$ of these irregular structures
is not easily verified. To better appreciate the differences between
two successive mesh levels we look at a magnification of the solution
of TC1 in the middle of one of the two channels connecting the source
and the sink areas, where $\OptTdensH$ reaches its maximum.  The
zoomed areas are identified in~\cref{fig:rect-150} with dotted ovals.
As seen in~\cref{fig:zoom-150}, this ideally low-dimensional structure
is approximated, at each refinement level, by a sequence of triangles
mostly connected only at nodes.  The width of the ``cross-section'' of
these channels is always formed by one triangle and thus decreases
linearly with $\MeshPar$ as the triangles are refined. The total flux
remains constant since the mass to be transferred from $\Source$ to
$\Sink$ is the same. Correspondingly, $\OptTdensH$ always achieves its
maximum in the triangles forming the central channels, with values that
increase as the mesh is refined.  At finer levels, the spatial
distribution of $\OptTdensH$ becomes more irregular especially within
the supports of $\Source$, as shown in~\cref{fig:rect-150}, displaying
sudden jumps of several orders of magnitude.  Notwithstanding these
irregularities, the solution seems to converge towards some limit
structure, supporting our conjecture that the equilibrium at
$t\Tendsto\infty$ is reached by our dynamics.

As a consequence the condition number of the matrix
$\Matr{A[\TdensH]}$ of the FEM linear system~\cref{eq:lin-sys-pflux}
increases drastically, leading in some extreme cases to
non-convergence of the \PCG\ solver.  As mentioned before, the
spectral preconditioner we use~\citep{Martinez-et-al:2017} is
developed to particularly address these problems.  However, this
strategy requires the construction of the Incomplete Cholesky
factorization with partial fill-in of the \SPD\ $\Matr{A[\TdensH]}$ in
\cref{eq:lin-sys-pflux}, which in radical occurrences of highly
refined meshes and large $\Pflux$ (typically in our experience for
$\Pflux\gg3$) may not exist and cannot be calculated.  In these cases
we abort the simulation.

\begin{figure}
  \centerline{
    \includegraphics[width=0.9\textwidth]{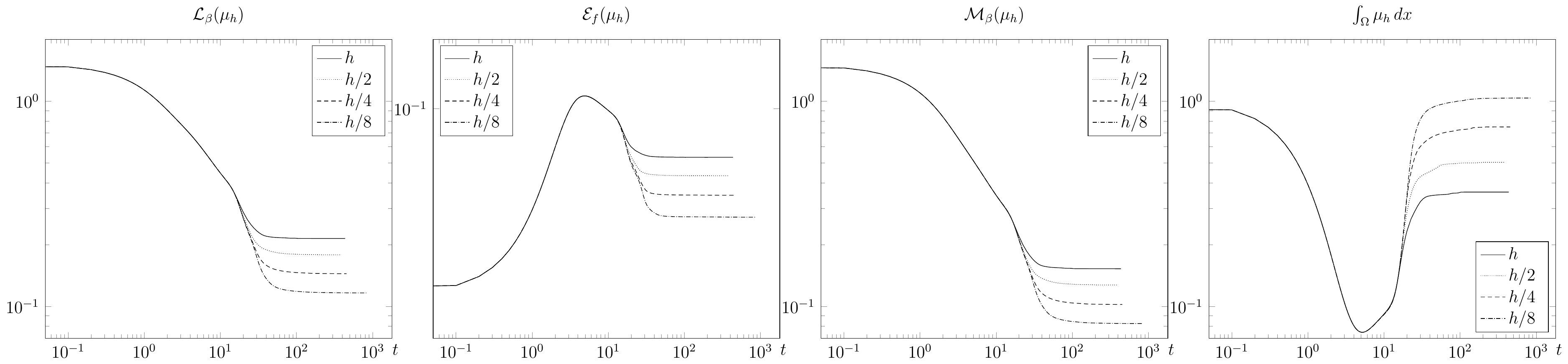}
  }
  \centerline{
    \includegraphics[width=0.892\textwidth]{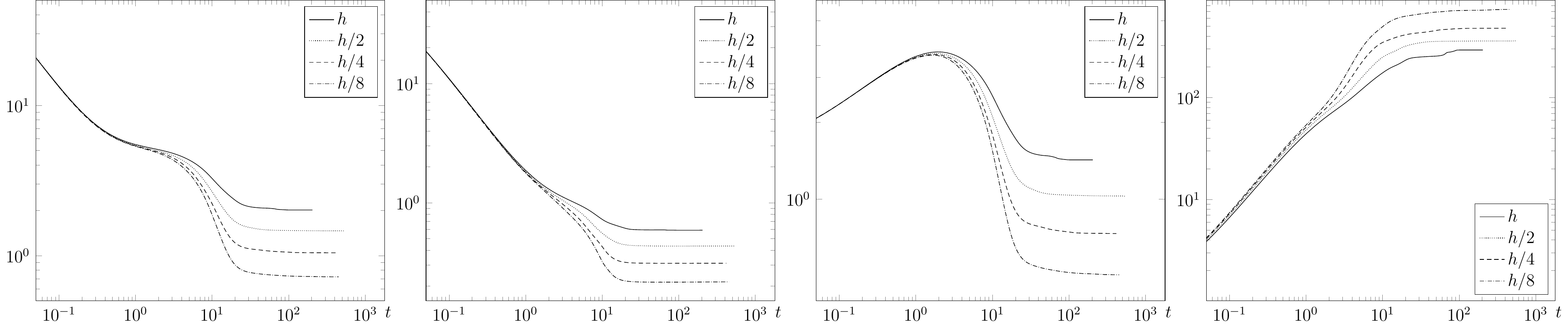}
  }  
  \caption{ Time evolution of $\Lyap_{\Pflux}(\TdensH(t))$,
    $\Ene(\TdensH(t))$, $\Wmass_{\Pflux}(\TdensH(t))$, and
    $\int_{\Domain} \TdensH(t)\dx$ (left to right) for TC1 (top) and
    TC2 (bottom), for all grid refinement levels, in the case
    $\Pflux=1.5$ $\TdensHIni\equiv 1$.  }
  \label{fig:functional-vs-time-150}
\end{figure}

\paragraph{Behavior of the \LCF}
We conjecture that the converged numerical solutions actually
correspond to local minima for $\Lyap_{\Pflux}$.  Thus we look at the
time evolution of the \LCF\ $\Lyap_{\Pflux}(\TdensH(t))$ and its
constituents $\Ene(\TdensH(t))$, $\Wmass_\Pflux(\TdensH(t))$, and
$\int_{\Domain}(\TdensH(t))$.  \Cref{fig:functional-vs-time-150} shows
the time evolution of these components at the different refinement
levels, using $\Pflux=1.5$ and $\TdensHIni\equiv 1$ as starting data.
All the plots show an initial common behavior followed by a distinct
pattern as a function of $\MeshPar$. Intuitively, higher resolutions
allow better exploration of the state space and thus better asymptotic
optimality, possibly leading to varying structures. It is interesting
to note that the behavior for both TC1 and TC2 is consistent inasmuch
as lower $\MeshPar$ leads to lower values of
$\Lyap_{\Pflux}$. Moreover, differences of the equilibrium
$\Lyap_{\Pflux}$-value at consecutive mesh levels remain constant,
corresponding to the constant ratio between subsequent $\MeshPar$
parameters.  These numerical simulations support the statements
in~\cref{prop-decr-lyap-pflux} on the decrease in time of the
$\Lyap_{\Pflux}(\TdensH(t))$. Note that similar results are obtained
for all powers $\Pflux$, initial data $\TdensHIni$, and for both
forcing terms considered.  The presence of local minima is typically
accompanied by sensitivity to initial conditions. Indeed, unlike the
case $\Pflux\le 1$, we observe this dependence, which in our cases
clearly influences the asymptotic value $\Lyap_{\Pflux}(\OptTdensH)$.

\begin{figure}
  \centerline{
    \includegraphics[width=0.3\textwidth]{./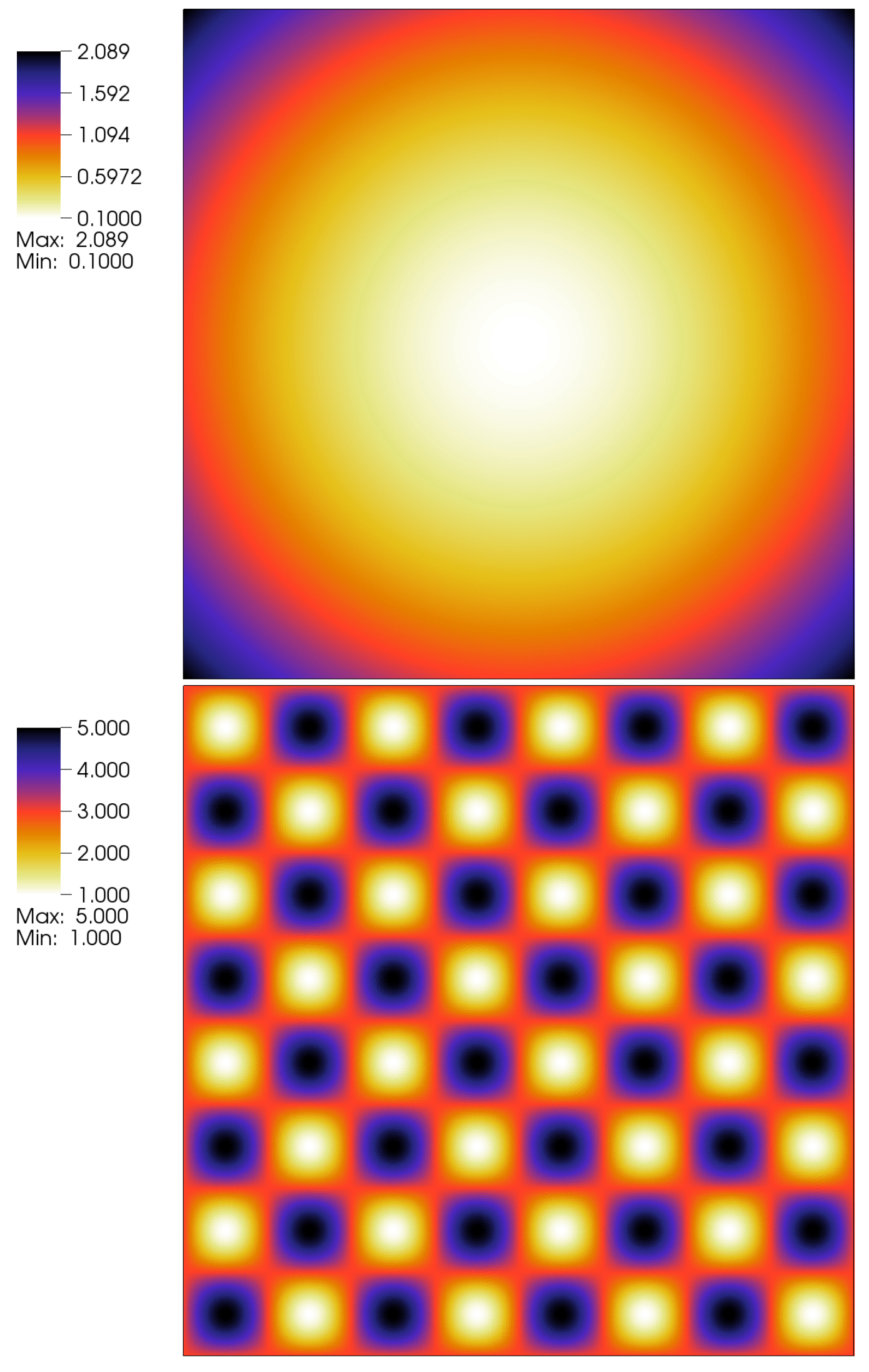}
    \includegraphics[width=0.3\textwidth]{./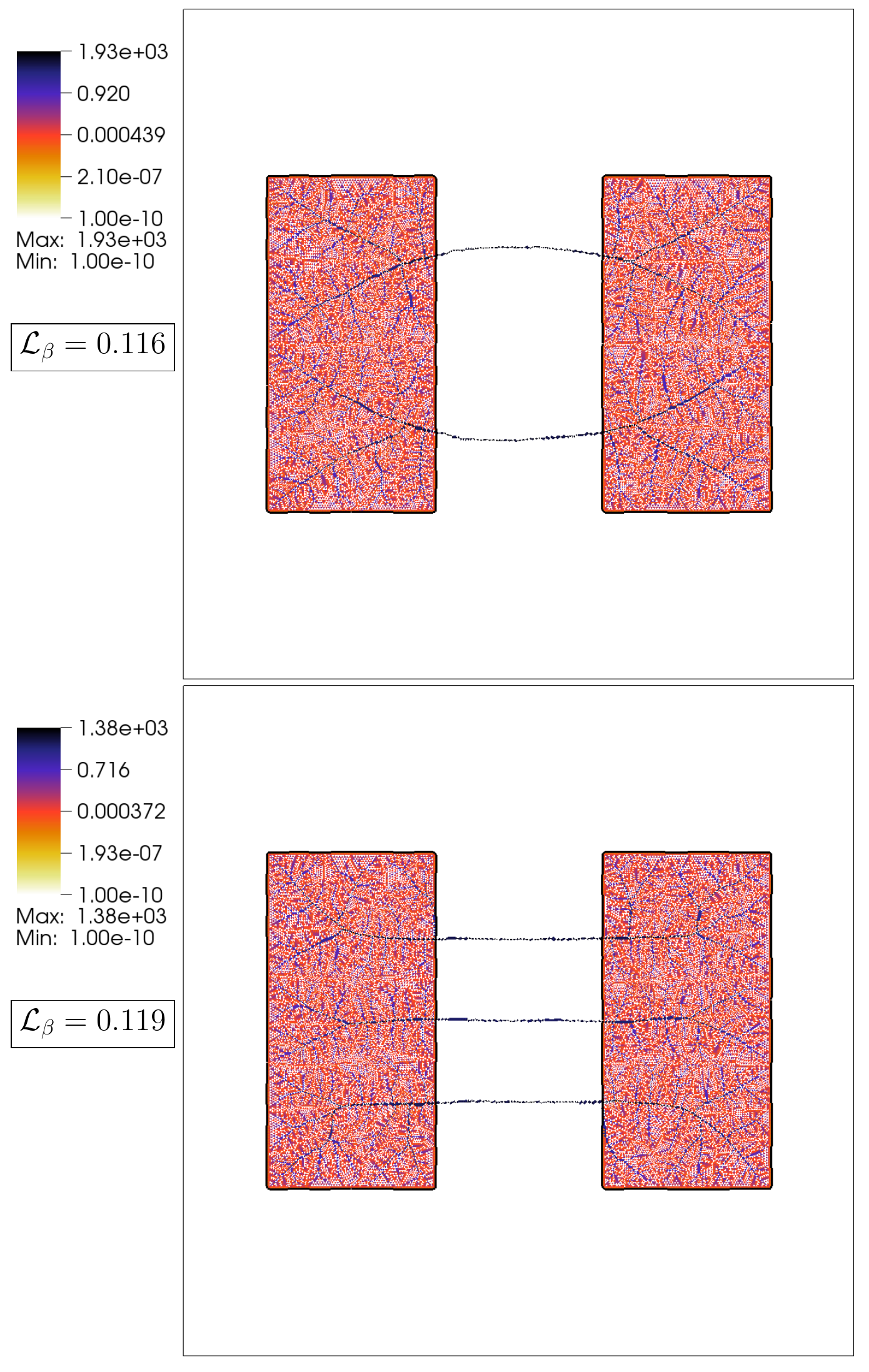}
    \includegraphics[width=0.3\textwidth]{./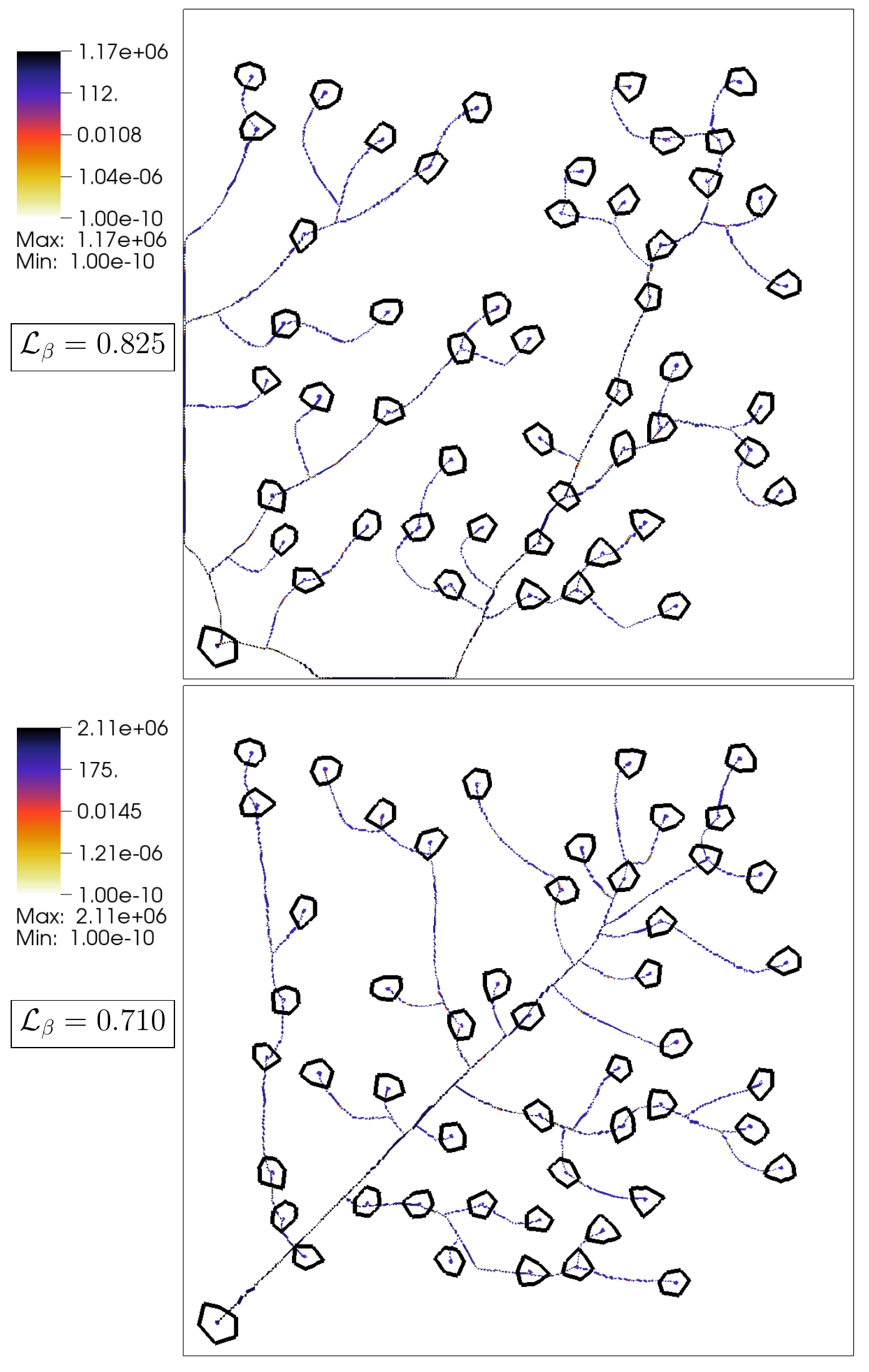}
  }
  \caption{ Spatial distribution of initial data $\TdensHIni[i]$ for
    $i=2,3$ (left panel), and the corresponding asymptotic state
    $\OptTdensH$ for TC1 (center) and TC2 (right) for the finest mesh
    and $\Pflux=1.5$. The value of the \LCF\ $\Lyap_{\Pflux}$ is
    displayed on the box under the color scale. The color scale is
    limited at the minimum threshold of $10^{-10}$.}
  \label{fig:final-pflux-150-td23}
\end{figure}

\begin{table}
  \caption{Values of the \LCF\ $\Lyap_{\Pflux}(\OptTdensH)$ for
    $\Pflux=1.5$ in test cases TC1 and TC2 for different initial
    conditions.}
  \label{tab:lyap-ini}
  \begin{center}
    \begin{tabular}{|l|l|l||l|l|l||l|l|l|}
      \hline
      IC &  TC1 & TC2 &
      IC &  TC1 & TC2 &
      IC &  TC1 & TC2 \\ \hline 
      $\TdensHIni[1]$ & 0.116 & 0.722 & 
      $\TdensHIni[2]$ & 0.116 & 0.825 & 
      $\TdensHIni[3]$ & 0.119 & 0.710 \\
      \hline
    \end{tabular}
  \end{center}
\end{table}

\vspace{1cm}
\paragraph{Sensitivity to initial conditions}
In this paragraph the above-mentioned dependence upon the initial
condition $\TdensHIni$ is explored by running both TC1 and TC2
starting from different spatial patterns. While the case
$\TdensHIni[1]=1$ is shown in the previous sections,
in~\cref{fig:final-pflux-150-td23} we report the $\OptTdensH$ behavior
obtained on the finest grid and for $\Pflux=1.5$ starting from
$\TdensHIni[2]$ and $\TdensHIni[3]$, whose spatial distributions is
shown in the left column. In the case of $\TdensHIni[2]$, where
initial data attain their minimum value of $0.01$ at the center of the
square, the supports of the equilibrium configurations of both TC1 and
TC2 seem to avoid, altogether regions of lower initial density.  On
the other hand, an oscillatory starting configuration leads to added
aggregating and distributive areas in the supports of $\Source$ and
$\Sink$. For TC1 this leads to the formation an additional central
channel connecting $\Source$ and $\Sink$. In TC2, the resulting tree
seems closer to the one obtained with uniform ICs ($\TdensHIni[1]$)
rather \replaced{than}{then} to the case of central
penalization ($\TdensHIni[2]$).  This sensitivity to initial
conditions suggests that the separate network patterns noted
in~\cref{fig:tree-150} are a result of both better resolution and
different time-step size sequences, hinting at the presence of local
minima from which the dynamics is not able to evade.  Indeed, the
value of $\Lyap_{\Pflux}(\OptTdensH)$ (summarized
in~\cref{tab:lyap-ini}) in the case of three central channels (TC3) is
larger than in the other two cases. On the other hand, for TC2,
similar $\Lyap_{\Pflux}$ values are achieved for $\TdensHIni[1]$ and
$\TdensHIni[3]$, with $\TdensHIni[2]$ being higher. Note that the fact
that $\TdensHIni[3]$ results in a smaller value of $\Lyap_{\Pflux}$
with respect to the $\TdensHIni[1]$ solution can be intuitively
justified by observing from the spatial patterns shown
in~\cref{fig:tree-150,fig:final-pflux-150-td23} that the checkerboard
initial condition promotes connections between nodes that are more
straight with respect to case of uniform IC. These results seem to
indicate a rather flat optimization horizon with a number of similar
minima towards which our dynamics leans as a function of the initial
conditions.

\begin{figure}
  \centerline{
    \includegraphics[width=0.8\textwidth]{./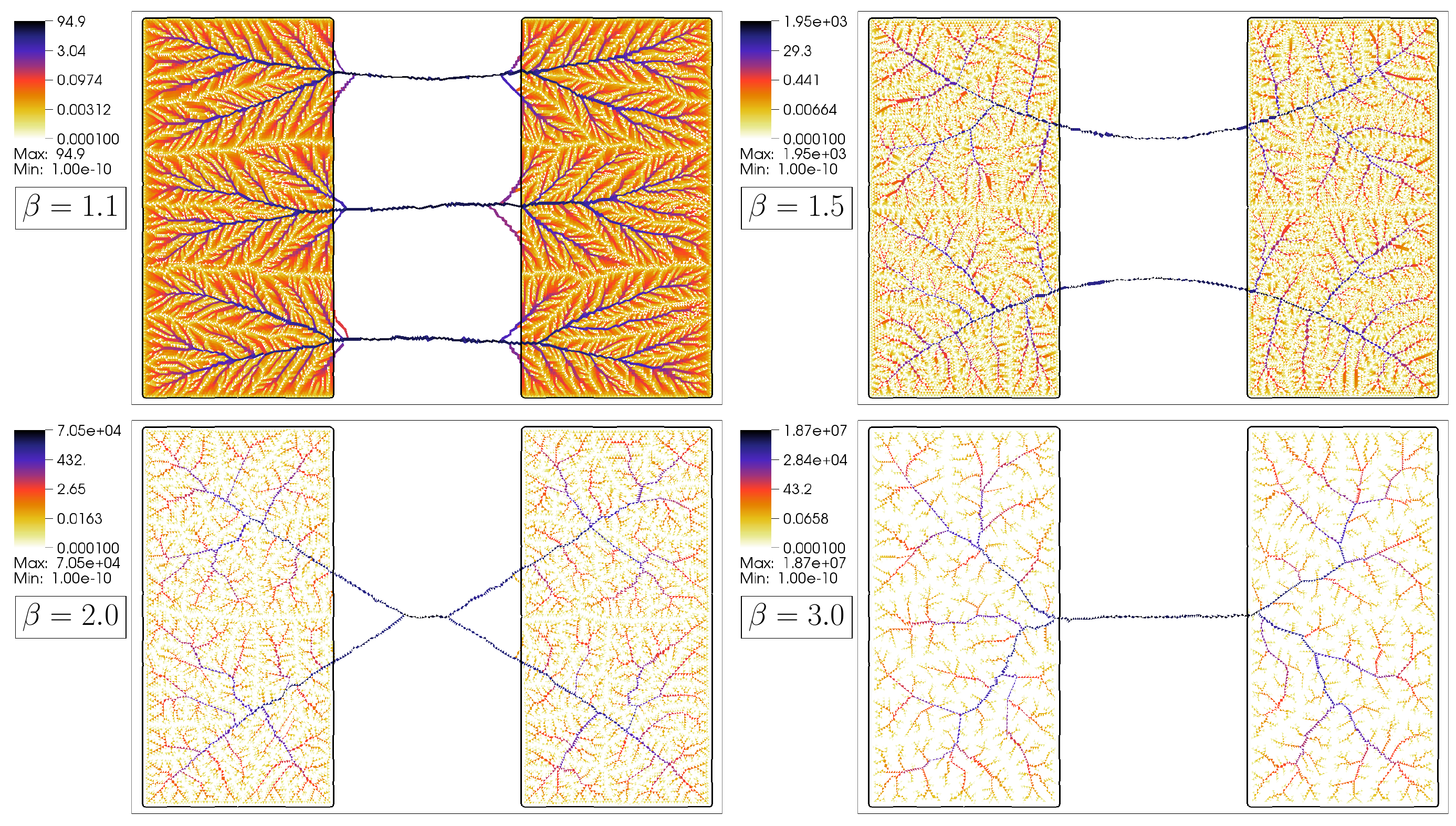}
  }
  \caption{ Behavior of the spatial distribution of
    $\OptTdens_{\Pflux}$ for TC1 for different values of $\Pflux$. The
    color scale starts from $10^{-4}$, but the white regions indicate
    where $\OptTdens_{\Pflux}$ attains the minimum threshold value
    of $10^{-10}$.}
  \label{fig:final-pflux-comparison}
\end{figure}

\begin{figure}
  \centerline{
    \includegraphics[width=0.75\textwidth]{./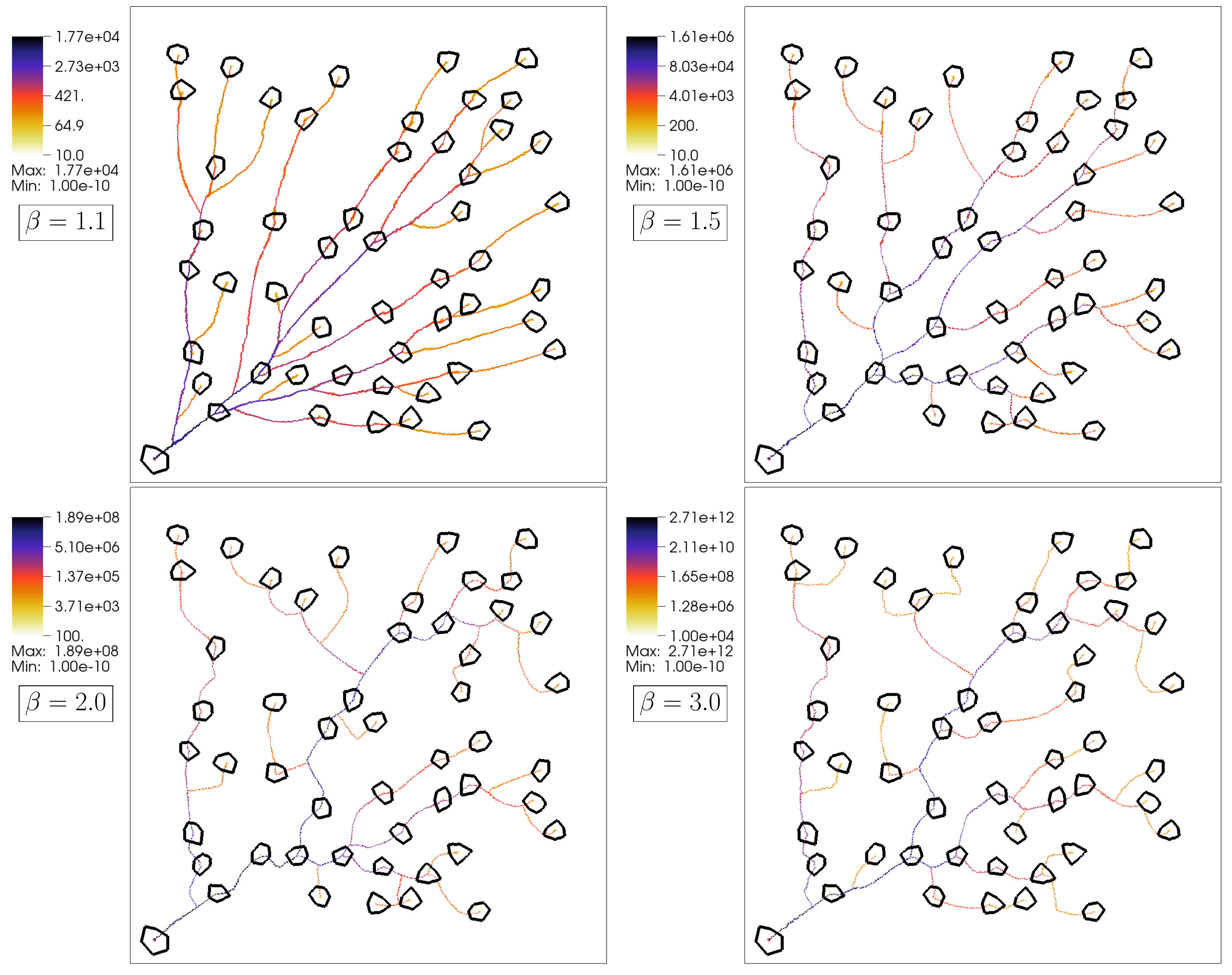}
  }
  \caption{ Behavior of the spatial distribution of
    $\OptTdens_{\Pflux}$ for TC2 for different values of $\Pflux$. The
    color scales start from different initial values, but the white
    regions indicate where $\OptTdens_{\Pflux}$ attains the
     minimum threshold value of $10^{-10}$.}
  \label{fig:dirac-pflux-comparison}
\end{figure}

\paragraph{Sensitivity to $\Pflux$}

This paragraph explores the sensitivity of the proposed model to the
power $\Pflux\in(1.1,1.5,2,3)$.
\Cref{fig:final-pflux-comparison,fig:dirac-pflux-comparison} report
$\OptTdensH$ obtained for the different values of $\Pflux$ on the
finest grid using uniform initial data $\TdensHIni[1]$ for TC1 and
TC2, respectively.  Looking at TC1, for increasing values of $\Pflux$
the proposed dynamics tends to create networks that seem to
increasingly promote concentration. Channels characterized by larger
transport density seem to be sparser and the source and sink areas are
connected by fewer singular structures.  In fact, the number of
central channels created at the equilibrium varies from three for
$\Pflux=1.1$ to one for $\Pflux=3$, with the final configuration
showing no branching points in the central channel.  In the TC2 case,
a number of topological changes in the emerging networks are clearly
noticed in the sequence.  Moreover, inaccuracies emerge in the form of
curve-shaped connections between branching points approximating the
expected straight lines.  These inaccuracies grow as $\Pflux$
increases and are clearly visible already for $\Pflux=2.0$.  We argue
that they are caused by the combined effects of grid alignment and the
dependence upon the initial data, leading to a configuration possibly
related to a local minimum of the \LCF.

For both test cases we note branching angles that increase with
$\Pflux$, as observed in the \BT\ theory of~\citet{Xia:2003} for
decreasing values of $\Pbranch$.  Analogously, for TC2 the branching
points move progressively away from the source nodes at increasing
$\Pflux$. These results suggest that $\Pflux$ must increase as
$\Pbranch$ decreases.

\begin{figure}
  \centerline{
      \includegraphics[width=0.6\textwidth]
      {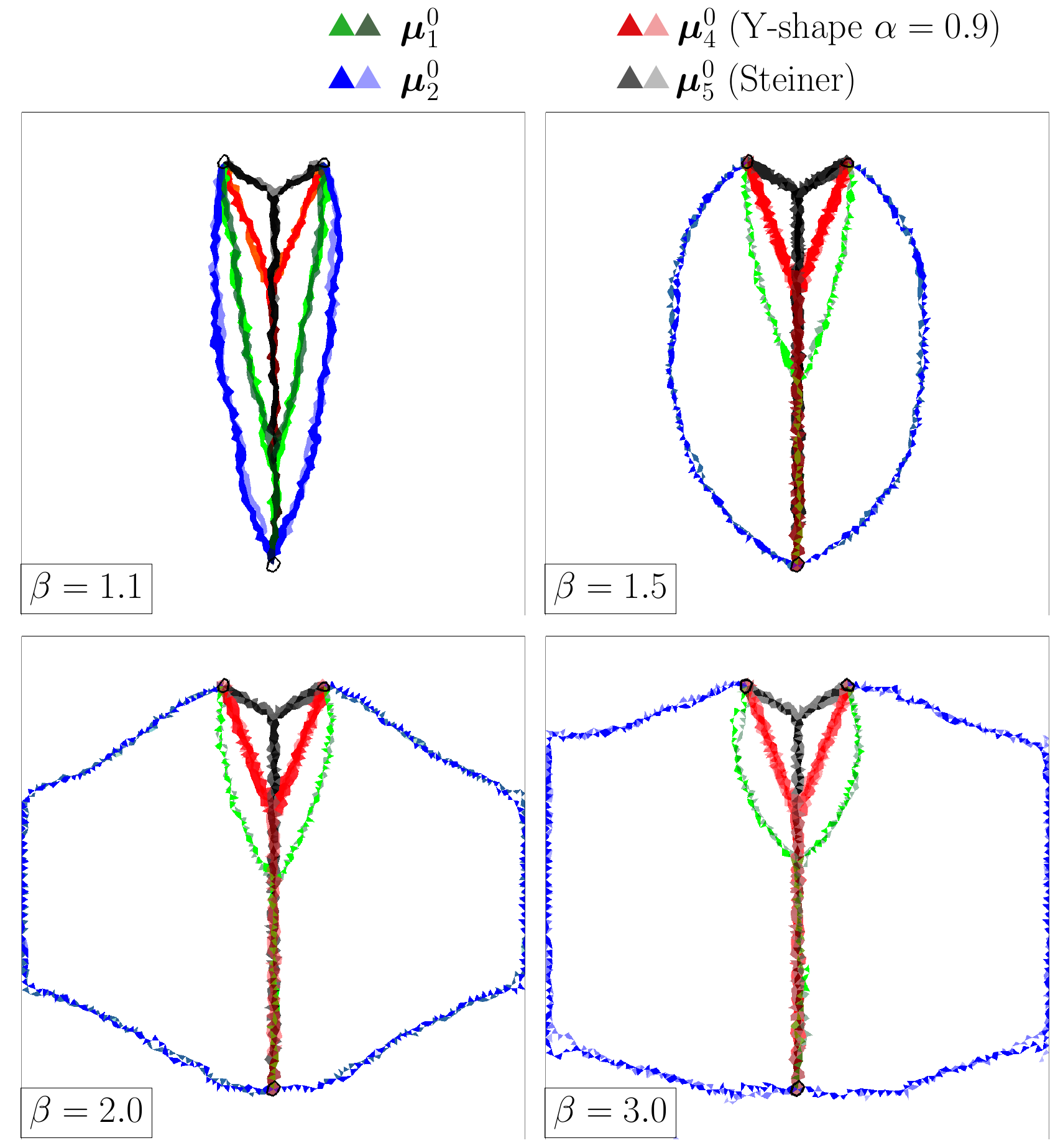}
  }
  \caption{ Spatial distribution of $\OptTdensH$ for TC3 starting form
    different initial data, as identify by different colors. The green
    and blue curves are obtained using $\TdensHIni[1]$ and
    $\TdensHIni[2]$, respectively. The \deleted{brown,} red and the
    black curves report the solutions obtained by using an initial
    distribution concentrated along the Y-shaped known solution of the
    standard branched transport problem for $\Pbranch=0.9, 0$.  The
    different panels show the equilibrium configurations of $\TdensH$
    for $\Pflux=1.1,1.5,2.0,3.0$.  Additionally, each color is
    represented with two different intensities to show the solutions
    obtained with two different meshes of the same size but different
    node distribution. The color scale is limited at the minimum
    threshold of $10^{-10}$.}
  \label{fig:y-comparison}
\end{figure}
\paragraph{Test Case 3}

TC3 is designed to address two fundamental questions arising from the
results of TC1 and TC2. First, we would like to explore the influence
on $\OptTdensH$ of the grid geometry and of the initial data
$\TdensIni$.  Then we would like to acquire some intuition on a
possible relationship $\Pflux(\Pbranch)$, in addition to the above
observation that it must be a decreasing function.  In this test case
the optimal vector field $\OptVel$ solution
of~\cref{eq:min-divergence} is supported on a Y-shaped graph that
branches at a point with coordinates $(0.5,c(\Pbranch))$ with
$c(\Pbranch)$ a function that grows from $0.1$ for $\Pbranch=1.0$ to
$0.8422$ for $\Pbranch=0.0$~\citep{Xia:2015}.  Note that the latter
value of $\Pbranch$ corresponds to Steiner problem, for which the path
branches at constant angles equal to $2/3\pi$.
\Cref{fig:y-comparison} shows a number of simulation results for
different values of $\Pflux$, different initial conditions, and
different meshes. Each curve in a panel displays the large-time
solution obtained with initial condition specified by the color codes
in the top legend. In particular, the green and blue curves correspond
to $\TdensHIni[1]$ and $\TdensHIni[2]$, respectively.  The initial
conditions identified with $\TdensHIni[4]$ and $\TdensHIni[5]$, (red
and gray, respectively) are the projection on $\Triang[\MeshPar]$ of
the \BT\ Y-shaped transport path for $\Pbranch=0.9$ and $\Pbranch=0$.
For each initial guess, we show two numerical solutions obtained from
simulations run with two different triangulations of the same size but
varying nodal distribution, identified with the same color but
different intensity.  The results reported in~\cref{fig:y-comparison}
suggests that the choice of the initial data $\TdensIni$ has a much
stronger influence on the steady state configuration $\OptTdensH$,
than the triangulation $\Triang[\MeshPar]$ used in the discretization
of ~\cref{eq:sys-pflux}.  In fact, we note that, independently on the
initial data $\TdensIni$ or the power $\Pflux$, the supports of
$\OptTdensH$ reported in~\cref{fig:y-comparison} with darker and
lighter colors clearly concentrate on the same limit structure, which
is only marginally influenced by the topological constraint imposed
a-priori by the graph associated with the triangulation.

On the other hand, we clearly notice that the supports of $\OptTdensH$
tend to concentrate on regions where $\TdensIni$ is larger, a behavior
already observed in TC1 and TC2.  For
$\TdensIni=\TdensHIni[4], \TdensHIni[5]$, the dynamics is not able to
drive $\TdensH$ away from the strongly pre-imposed paths.  For
$\TdensIni=\TdensHIni[2]$, $\OptTdensH$ distributes on two separate
branches concentrating on regions with initially higher
conductivities.  Such behavior is more pronounced as $\Pflux$ grows.
For the case $\TdensIni=\TdensHIni[1]$, where the initial data is
uniformly distributed and there is no a-priori bias, the support of
$\OptTdensH$ forms a Y-like shape, with a bifurcation point at
$(0,c(\Pflux))$ with $c(\Pflux)$ that is gradually increasing with
$\Pflux$. However, we note how the point $(0,c(\Pflux))$ does not
quite reach the bifurcation point $(0,c(\Pbranch))$ of the reference
solution for $\Pbranch=0.9$ even for the largest value $\Pflux=3.0$.

Unfortunately, the only conclusion that can be deduced from this and
the other test cases not reported here is the already mentioned
decreasing behavior of $\Pflux(\Pbranch)$.  The strong dependence of
our dynamics on the initial data does not allow a more accurate
characterization of this relation.

\section{Conclusions and discussion}
We have presented and discussed an extension of the \DMK\ model where
the transient equation for $\Tdens$ is modulated by a power law of the
transport flux with exponent $\Pflux>0$. The original \DMK\ model
introduced in~\cite{Facca-et-al:2018} is a subset of the version
discussed in this paper when $\Pflux=1$.

We conjecture that, for $0<\Pflux<1$, the long-time limit of the
extended \DMK\ model is equivalent to a $\Plapl$-Poisson Equation
for $\Plapl=(2-\Pflux)/(1-\Pflux)$, and, consequently, it is
equivalent to the \CTP.  Theoretical and numerical evidence support
our claims and show that the extended \DMK\ represents a new dynamic
formulation of the $\Plapl$-Poisson equation and can be proposed as a
relaxation for the efficient numerical solution of $\Plapl$-Laplacian.

In the case $\Pflux>1$, we claim a link between the steady state
solution $(\OptTdens,\OptPot)$ of our extended \DMK\ model and
solutions of \BTP s.  In this case, the complex solutions emerging
from our dynamics remind of singular distributions typical of branched
transport problems. However, when comparing both theoretically and
numerically our \BTP\ with the more classical \BTP\ studied, e.g.,
in~\citet{Xia:2003,Santambrogio:2010,
  Oudet-Santambrogio:2011,Xia:2014,Xia:2015,Pegon:2018} differences
arise.  In primis we need to acknowledge that our formulation, being
based on the FEM approach, is based on densities that can be
approximated with the Lebesgue measure.  Next, we are not able to
formulate an exact relationship between the equilibrium configurations
$(\OptTdens,\OptPot)$ and the more classical solutions of \BTP.  On
the other hand, several numerical examples strengthen our confidence
that our approach leads to interesting solutions that are promising in
the quest for numerical solutions of \BTP s.  Indeed, our approach
seems to be efficient and robust enough to produce trusty results at
least for distributed sources.

\section*{Acknowledgments}
This work was partially funded by the the UniPD-SID-2016 project
“Approximation and discretization of PDEs on Manifolds for
Environmental Modeling” and by the EU-H2020 project
``GEOEssential-Essential Variables workflows for resource efficiency
and environmental management'', project of ``The European Network for
Observing our Changing Planet (ERA-PLANET)'', GA 689443.

\bibliographystyle{abbrvnat}
\bibliography{strings,biblio}
\end{document}